\documentclass{amsart}

\hfuzz 2pt
\vfuzz 2pt

\usepackage{amssymb,amsmath}

\theoremstyle{plain}
\newtheorem{theorem}{Theorem}[section]
\newtheorem*{theorem*}{Theorem}
\newtheorem{lemma}[theorem]{Lemma}

\newtheorem{proposition}[theorem]{Proposition}
\theoremstyle{remark}
\newtheorem{remark}[theorem]{Remark}
\numberwithin{equation}{section}
\theoremstyle{definition}
\newtheorem{definition}[theorem]{Definition}
\numberwithin{equation}{section}
\numberwithin{equation}{section}

\newcommand\RR{\mathbb{R}}
\newcommand\dir{{\rm{\,d}}\rho}
\newcommand\dil{{\rm{\,d}}\lambda}
\newcommand\di{{\rm{\,d}}}
\newcommand\nep{{\rm{e}}}
\newcommand\CZ{Calder\'on--Zygmund }

\begin{document}

\title[Heat maximal function]
{ Heat maximal function \\on a Lie group of exponential growth }

\subjclass[2000]{ 22E30, 35K08, 42B25, 42B30}

\keywords{Heat kernel, maximal function, Hardy space, Lie groups, exponential growth.}

\thanks{Work partially supported by Progetto GNAMPA 2011``Analisi armonica su gruppi di Lie e variet\'a'' .}

\author[P. Sj\"ogren and M. Vallarino]
{Peter Sj\"ogren and Maria Vallarino}


\address{Peter Sj\"ogren: 
Mathematical Sciences, 
University of Gothenburg and  Mathematical Sciences, 
Chalmers\\
S-412 96 G\"oteborg\\Sweden  
}
\email{peters@math.chalmers.se}

\address{Maria Vallarino:
Dipartimento di Matematica 
\\ Politecnico di Torino\\
Corso Duca degli Abruzzi, 24 \\ 10129 Torino \\ Italy} \email{maria.vallarino@polito.it}

\begin{abstract}
Let $G$ be the Lie group ${\Bbb{R}}^2\rtimes {\Bbb{R}}^+$ endowed with the Riemannian symmetric space structure. Let $X_0,\, X_1,\,X_2$ be a distinguished basis of left-invariant vector fields of the Lie algebra of $G$ and define the Laplacian $\Delta=-(X_0^2+X_1^2+X_2^2)$. In this paper, we show that the maximal function associated with the heat kernel of the Laplacian $\Delta$  is bounded from the Hardy space $H^1$ to $L^1$. We also prove that the heat maximal function does not provide a maximal characterization of the Hardy space $H^1$. 
\end{abstract}

\maketitle

\section{Introduction}\label{intro}
Let $G$ be the Lie group $\RR^2\rtimes \RR^+$ where the product rule is the following:
$$(x_1,x_2,a)\cdot(x'_1,x'_2,a')=(x_1+a\,x'_1,x_2+a\,x'_2,a\,a') $$
for $(x_1,x_2,a),\,(x'_1,x'_2,a')\in G$. We shall denote by $x$ the point $(x_1,x_2,a)$. The group $G$ is not unimodular; the right and left Haar measures are given by
$$\dir(x)=a^{-1}\,\di x_1\di x_2\di a\qquad{\rm{and}}\qquad \dil(x)=a^{-3}\,\di x_1\di x_2\di a\,,$$
respectively. The modular function is thus $\delta(x)=a^{-2}$. Throughout this paper, unless explicitly stated, we use the right measure $\rho$ on $G$ and denote by $L^p$, $\|\cdot\|_p$ and $\langle\cdot,\cdot \rangle$  the $L^p$-space, the $L^p$-norm and the $L^2$-scalar product with respect to the measure $\rho$. 

The group $G$ has a Riemannian symmetric space structure, and the corresponding metric, which we denote by $d$, is that of the three-dimensional hyperbolic half-space. The metric $d$ is invariant under left translation and given by
\begin{equation}\label{metrica}
\cosh r(x)=\frac{a+a^{-1}+a^{-1}|x|^2}{2}\,,
\end{equation}
where $r(x)=d(x,e)$ denotes the distance of the point $x$ from the identity $e=(0,0,1)$ of $G$ and $|x|$ denotes the Euclidean norm of the two-dimensional vector $(x_1,x_2)$, i.e., $|x|=\sqrt{x_1^2+x_2^2}$. The measure of a hyperbolic ball $B_r$, centered at the identity and of radius $r$, behaves like
$$\lambda(B_r)=\rho(B_r)\sim  \begin{cases}
r^3&{\rm{if}}~r<1\\
\nep^{2r}&{\rm{if}}~r\geq 1\,.
\end{cases}
$$
Thus $G$ is a group of {\emph{exponential growth}}. In this context, the classical \CZ theory and the classical definition of the atomic Hardy space $H^1$ (see \cite{CW1, CW2, St}) do not apply. But W.~Hebisch and T.~Steger \cite{HS} have constructed a \CZ theory which applies to some spaces of exponential growth, in particular to the space $(G,d,\rho)$ defined above. The main idea is to replace the family of balls which is used in the classical \CZ theory by a suitable family of parallelepipeds which we call \emph{\CZ sets}. The definition appears in \cite{HS} and implicitly in \cite{GS}, and reads as follows.
\begin{definition}\label{Czsets}
A Calder\'on-Zygmund set is a parallelepiped $R=[x_1-L/2,x_1+L/2]\times[x_2-L/2,x_2+L/2]\times [a\nep^{-r},a\nep^r]$, where $L>0$, $r>0$ and $(x_1,x_2,a)\in G$ are related by
$$\nep^{2}a\,r\leq L< \nep^{8 }a\,r\qquad{\rm{if~}}r<1\,,$$
$$a\,\nep^{2r}\leq L< a\,\nep^{8r}\qquad{\rm{if~}}r\geq 1\,.$$
The point $(x_1,x_2,a)$ is called the center of $R$.
\end{definition}
We let $\mathcal R$ denote the family of all \CZ sets, and observe that  $\mathcal R$ is invariant under left translation. Given  $R \in \mathcal R$, we define its dilated set as $R^*=\{x\in G:~d(x,R)<r\}$. There exists an absolute constant $C_0>0$ such that $\rho(R^*)\leq C_0\,\rho(R)$ and 
\begin{equation}\label{C0}
R\subset B\big((x_1,x_2,a), C_0r   \big)\,.
\end{equation}

In \cite{HS} it is proved that every integrable function on $G$ admits a \CZ decomposition involving the family $\mathcal R$, and that a new \CZ theory can be developed in this context. By using the \CZ sets, it is natural to introduce an atomic Hardy space $H^1$ on the group $G$, as follows (see \cite{V} for details).

We define an {\emph{atom}} as a function $A$ in $L^1$ such that
\begin{itemize}
\item [(i)] $A$ is supported in a \CZ set $R$;
\item [(ii)]$\|A\|_{\infty}\leq \rho(R)^{-1}\,;$ 
\item [(iii)]$\int A\dir =0$\,.
\end{itemize}
The atomic Hardy space is now defined in a standard way. 
\begin{definition}
The Hardy space $H^{1}$ is the space of all functions $f$ in $ L^1$ which can be written as $f=\sum_j \lambda_j\, A_j$, where $A_j$ are atoms and $\lambda _j$ are complex numbers such that $\sum _j |\lambda _j|<\infty$. We denote by $\|f\|_{H^{1}}$ the infimum of $\sum_j|\lambda_j|$ over such decompositions.
\end{definition}

The \CZ theory from \cite{HS} has turned out useful for the study of the boundedness of singular integral operators related to a distinguished Laplacian on $G$, which is defined as follows.

Let $X_0,\,X_1,\,X_2$ denote the left-invariant vector fields  
$$X_0=a\,\partial_a\qquad X_1=a\,\partial_{x_1}\qquad  X_2=a\,\partial_{x_2}\,,$$
which span the Lie algebra of $G$. The Laplacian $\Delta=-(X_0^2+X_1^2+X_2^2)$ is a left-invariant operator which is essentially selfadjoint on $L^2(\rho)$. It is well known that the heat semigroup $\big( e^{-t\Delta}\big)_{t>0}$ is given by a kernel $p_t$, in the sense that $e^{-t\Delta}f=f\ast p_t$ for suitable functions $f$.  Let $\mathcal M$ denote the corresponding maximal operator, defined by
\begin{equation} \label{hetamaxop}
\mathcal M f(x)=\sup_{t>0} |f\ast p_t(x)|\qquad \forall x\in G\,.
\end{equation}
M. Cowling, G. Gaudry, S. Giulini and G. Mauceri \cite{CGGM} proved that $\mathcal M$ is of weak type $(1,1)$ and bounded on 
$L^p$ for all $p>1$. In this paper, we prove the following result. 
\begin{theorem}\label{H1L1bdd}
\begin{itemize}
\item[(i)] If $f\in H^1$, then $\mathcal Mf\in L^1$ and
$$
\|\mathcal Mf\|_1\leq C\,\|f\|_{H^1}\,,
$$ 
where $C$ is independent of $f$.
\item[(ii)] However, the converse inequality is false, and $f\in L^1$ and $\mathcal Mf\in L^1$ does not imply $f\in H^1$. 
\end{itemize}
\end{theorem}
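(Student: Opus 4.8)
The plan is to prove Part (i) by reducing to atoms and Part (ii) by exhibiting an explicit counterexample whose existence is a non-unimodular phenomenon.

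\emph{Part (i), reduction and local estimate.} Since $\mathcal M$ is sublinear and, for each fixed $t$, the map $f\mapsto f\ast p_t$ is linear and continuous on $L^1$, any atomic decomposition $f=\sum_j\lambda_j A_j$ gives $\mathcal Mf\le\sum_j|\lambda_j|\,\mathcal MA_j$ pointwise; hence it suffices to prove a bound $\|\mathcal MA\|_1\le C$ uniform over all atoms $A$, and then pass to the infimum over decompositions. Fix an atom $A$ supported in a \CZ set $R$ with centre $c=(x_1,x_2,a)$ and parameters $L,r$, and split $\int_G\mathcal MA\dir$ into the contributions of $R^*$ and of its complement. On $R^*$ I would invoke the $L^2$-boundedness of $\mathcal M$ from \cite{CGGM}: by the Cauchy--Schwarz inequality, the size condition (ii), and $\rho(R^*)\le C_0\,\rho(R)$,
$$
\int_{R^*}\mathcal MA\dir\le\rho(R^*)^{1/2}\,\|\mathcal MA\|_2\le C\,\rho(R)^{1/2}\,\|A\|_2\le C\,\rho(R)^{1/2}\,\rho(R)^{-1/2}=C,
$$
which is the desired uniform local bound.

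\emph{The global estimate and the main obstacle.} The heart of Part (i) is $\int_{(R^*)^c}\mathcal MA\dir\le C$. Here I would use the cancellation (iii): for every $x$ and $t$,
$$
A\ast p_t(x)=\int_R A(y)\,\big[p_t(y^{-1}x)-p_t(c^{-1}x)\big]\dir(y),
$$
and estimate the kernel difference by integrating a left-invariant derivative $X_ip_t$ along a path inside $R$ joining $y$ to $c$. Taking the supremum over $t$ and integrating in $x$ reduces the problem to a Hörmander-type bound for the \emph{maximal} kernel,
$$
\sup_{y\in R}\ \int_{(R^*)^c}\ \sup_{t>0}\,\big|p_t(y^{-1}x)-p_t(c^{-1}x)\big|\dir(x)\le C.
$$
The main difficulty — where the bulk of the work lies — is to control the supremum over $t$ against the exponential volume growth of $G$: one needs pointwise estimates for $\sup_{t>0}|X_ip_t|$ that decay in $r(x)$ fast enough to beat the factor $\nep^{2r}$ coming from $\rho$, and these must be treated separately for small \CZ sets ($r<1$, Euclidean-like) and large ones ($r\ge1$, stretched in the $a$-direction). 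I expect to obtain the required kernel estimates from the known behaviour of $p_t$ for the distinguished Laplacian (via its relation to the hyperbolic heat kernel and the spectral gap), splitting the range of $t$ according to its size relative to the geometry of $R$.

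\emph{Part (ii).} The failure of the converse is genuinely non-unimodular. The key observation is that left translation by $y=(y_1,y_2,b)$ multiplies $\rho$ by $\delta(y)^{-1}=b^{2}$, so a direct computation gives
$$
\int_G f\ast p_t\dir=\Big(\int_G p_t\dir\Big)\int_G f\,\delta^{-1}\dir\qquad(t>0).
$$
Thus the heat mass that survives is governed by the weighted moment $\int_G f\,\delta^{-1}\dir$ and \emph{not} by $\int_G f\dir$, whereas $\int A\dir=0$ for every atom forces $\int_G f\dir=0$ for all $f\in H^1$. I would therefore construct $f\in L^1$, with adequate decay, satisfying $\int_G f\,\delta^{-1}\dir=0$ but $\int_G f\dir\neq0$: the first condition removes the leading obstruction to integrability of the maximal function, while the second immediately yields $f\notin H^1$. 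The technical crux is then to verify, using the same heat-kernel estimates as in Part (i), that this $f$ indeed has $\mathcal Mf\in L^1$; since $\mathcal Mf\ge|f|$ the membership $f\in L^1$ is automatic, so it is precisely the integrability of $\mathcal Mf$ at infinity that must be checked.
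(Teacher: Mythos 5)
Your outline of Part (i) is essentially the paper's argument: reduction to a uniform bound on atoms, the $L^2$ estimate on a fixed dilate of $R$, and cancellation combined with an integrated (H\"ormander-type) bound for $\sup_{t>0}\|\nabla p_t\|$ outside the dilate, with small and large \CZ sets treated separately. That plan is sound, although all of the actual work (the pointwise and integrated gradient estimates, and the three-region decomposition needed when $r_0\ge 1$) is deferred.

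Part (ii), however, rests on a false identity, and the strategy built on it would fail. Since $\rho$ is the \emph{right} Haar measure and the convolution is $f\ast p_t(x)=\int f(xy^{-1})\,p_t(y)\dir(y)$, the translation acting on the argument of $f$ is a right translation, which preserves $\rho$; Fubini therefore gives
$$
\int_G f\ast p_t\dir=\Big(\int_G p_t\dir\Big)\int_G f\dir\,,
$$
with no modular factor. (Your observation that left translation by $y$ rescales $\rho$ by $\delta(y)^{-1}$ is correct, but left translations do not occur here.) So the moment governing the surviving mass is the plain integral $\int f\dir$ --- precisely the one that vanishes for every $f\in H^1$ --- and your proposed function with $\int f\,\delta^{-1}\dir=0$ but $\int f\dir\neq0$ points in the wrong direction. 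Indeed, since $\sup_{t>0}t^{-3/2}\nep^{-r^2/4t}\sim r^{-3}$ is a two-sided estimate, the maximal function of a non-cancelling bump is bounded below by $c\,\delta^{1/2}(x)\,r(x)^{-2}\sinh^{-1}r(x)$ for large $r(x)$, and by \eqref{intformula} its integral behaves like $\int_1^\infty r^{-1}\di r=\infty$; the condition $\int f\dir=0$ cannot be dropped if one wants $\mathcal Mf\in L^1$.

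More importantly, no moment condition can produce the counterexample: the paper's function $f$ satisfies $\int f\dir=0$ and each of its dyadic pieces is a multiple of an atom. The obstruction to $f\in H^1$ is not a non-vanishing moment but a logarithmic accumulation across scales, detected by pairing with the $BMO$ function $\log|x_1|$. The quantitative heart of Part (ii) --- that for the translated difference $f_L=\chi_{R_L}-\chi_{R_0}$ one has $\|f_L\|_{H^1}\gtrsim\log L$ by $H^1$--$BMO$ duality, while $\|\mathcal Mf_L\|_1\lesssim\log\log L$ because the heat kernel difference gains a full derivative in the $x_1$-direction --- is absent from your proposal, and without such a gap between the two norms there is nothing to sum into a counterexample.
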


\begin{remark}
A similar maximal operator $\mathcal {\widetilde{M}}$ can be defined by means of the Poisson semigroup $\big(e^{-t\sqrt\Delta}\big)_{t>0}$. Theorem \ref{H1L1bdd} is valid also for this maximal operator, since the 
proof given below remains valid with only small modifications. 
 
\end{remark}

In our setting, there is thus no characterization of the atomic $H^1$ space by means of the heat or the Poisson 
maximal operator. This is in contrast with the classical theory and other settings, as we now recall.

In the Euclidean case, the atomic Hardy space $H^1(\mathbb R^n)$ has a maximal characterization (see \cite[Chapter 3]{St}). More precisely, $H^1(\mathbb R^n)$ coincides with the set of functions $f\in L^1(\mathbb R^n)$ such that $\mathcal M_{\phi}f\in L^1(\mathbb R^n)$, where
$$
\mathcal M_{\phi}f(x)=\sup_{t>0}\big| f\ast \phi_t (x)\big| \qquad \forall x\in\mathbb R^n\,,
$$
$\phi$ is any Schwartz function with $\int\phi(x)dx\neq 0$ and $\phi_t(x)=t^{-n}\,\phi(t^{-1}\,x)$. Moreover, the norms $\|f\|_{H^1(\mathbb R^n)}$ and $\|\mathcal M_{\phi}f\|_{L^1(\mathbb R^n)}$ are equivalent.

On a space $\mathbb X$ of homogeneous type R.R. Coifman and G. Weiss \cite{CW1, CW2} introduced an atomic Hardy space $H^1_{\rm{at}}(\mathbb X)$. Under some additional assumption, different maximal characterizations of the Hardy space $H^1_{\rm{at}}(\mathbb X)$ were obtained by R.A. Mac\'ias and C. Segovia \cite{MS}, W.M. Li \cite{L}, L. Grafakos, L. Liu and D. Yang \cite{GLY1, GLY2}.

On the Euclidean space $\mathbb R^n$ endowed with a nondoubling Radon measure $\mu$ of polynomial growth, X.~Tolsa \cite{T} defined an atomic Hardy space $H^1(\mu)$ and proved that it can be characterized by a grand maximal operator as in the classical setting.

In the setting of a gaussian measure and the Ornstein-Uhlenbeck operator in $\mathbb R^n$, G. Mauceri and S. Meda \cite{MM} introduced an atomic $H^1$ space. For $n=1$ Mauceri, Meda and Sj\"ogren \cite{MMS} gave a maximal characterization of this space.

\bigskip

In this paper, $C$ denotes a positive, finite constant which may vary from line to line and may depend on parameters according to the context. Given two positive quantities $f$ and $g$, by $f\lesssim g$ we mean that there exists a constant $C$ such that $f\leq C\, g$, and $f\sim g$ means that $g\lesssim f\lesssim g$.

\section{The heat maximal function}\label{heat}

We shall use the following integration formula (see for instance \cite[Lemma 1.3]{CGHM}): for any radial function $f$ such that $\delta^{1/2}f$ is integrable 
\begin{equation}\label{intformula}
\int_G\delta^{1/2}f\dir=\int_0^{\infty}f(r)\,r\,\sinh r\di r\,.
\end{equation}

The convolution in $G$ is defined by 
\begin{equation}\label{conv1}
f\ast g(x)=\int f(xy^{-1})g(y)\dir (y)\,.
\end{equation}

Let $p_t$ denote the heat kernel associated with $\Delta$. It is well known \cite[Theorem 5.3, Proposition 5.4]{CGGM}, \cite[formula (5.7)]{ADY} that 
\begin{equation}\label{heatkernel}
p_t(x)=\frac{1}{8\pi^{3/2}}\,\delta^{1/2}(x)\,\frac{r(x)}{\sinh r(x)}\,t^{-3/2}\,\nep^{-\frac{r^2(x)}{4t}}   \qquad \forall x\in G\,.
\end{equation}
Note that $p_t=\delta^{1/2}\,h_t$, where $h_t$ is the heat kernel associated with the operator $\mathcal L-I$ and $\mathcal L$ is the Laplace--Beltrami operator on the three-dimensional hyperbolic space. The kernel $h_t$ and its gradient were studied in \cite{ADY, CGHM}.

We remark that given \eqref{heatkernel} it is easy to write the formula for the Poisson kernel. It is enough to replace the factors $t^{-3/2}\,\nep^{-\frac{r^2(x)}{4t}}$ by $t/(t^2+r^2(x))^2$ and the constant factor by  ${1}/{\pi^2}$.

Since $r(x^{-1})=r(x)$ we have that 
\begin{equation}\label{invpt}
p_t(x)=\delta(x)\,p_t(x^{-1})\qquad \forall x\in G\,.
\end{equation}
Via a change of variables in \eqref{conv1}, this implies 
\begin{equation}\label{conv} 
\begin{aligned}
f\ast p_t(x)&=\int f(z^{-1})\,p_t(zx)\dir(z)\\
&=\int f(y)\,p_t(y^{-1}x)\delta(y)\dir(y)\\
&=\delta(x)\int f(y)\,p_t(x^{-1}y)\dir(y)\,.
\end{aligned}
\end{equation}
We shall need the observation that for any $r>0$ 
\begin{equation}\label{supt}
\sup_{t>0}t^{-3/2}\, \nep^{-\frac{r^2}{4t}}\sim r^{-3}\,. 
\end{equation}

We now give some properties of the heat kernel which will be useful in the rest of the paper.
\begin{lemma}\label{Xipt}
For any point $x\in G$ the derivatives of the heat kernel $p_t$ along the vector fields $X_i$, $i=0,1,2,$ are the following:
\begin{itemize}
 \item[(i)] $X_ip_t(x)=p_t(x)\frac{x_i}{\sinh r}\Big( \frac{1}{r}-\frac{\cosh r}{\sinh r}-\frac{r}{2t}\Big)\,$ for  $i=1,2$;
\item[(ii)] $ X_0p_t(x)=p_t(x)\Big( \frac{a}{r\sinh r}-\frac{\cosh r}{r\sinh r}-\frac{a\cosh r}{\sinh^2 r}
+\frac{1}{\sinh ^2r}+\frac{r}{2t}\frac{\cosh r-a}{\sinh r}
 \Big) \,,$
\end{itemize}
where $r$ denotes $r(x)$. Moreover, 
\begin{itemize}
\item[(iii)] $\sup_{t>0}|X_ip_t(x)|\lesssim r^{-4}$ for $i=0,1,2$ and for all $x\in B_1$;
 \item[(iv)] $\sup_{t>0}|X_ip_t(x)|\lesssim \frac{|x|}{a\,r^2\cosh^2 r}$ for $i=1,2$ and for all $x\in B_1^c$;
\item[(v)]  $\sup_{t>0}|X_0p_t(x)|\lesssim \frac{1}{a\,r^3\cosh r}+\frac{1}{r^2\cosh^2 r}$ for all $x\in B_1^c$\,.
\end{itemize}
\end{lemma}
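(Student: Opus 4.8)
The plan is to derive the closed forms (i) and (ii) by logarithmic differentiation of \eqref{heatkernel}, and then to extract the pointwise bounds (iii)--(v) by separating the $t$-independent and $t$-dependent parts of each derivative and taking the supremum in $t$ term by term.

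For (i) and (ii), since $\delta^{1/2}=a^{-1}$ we have $\log p_t=\mathrm{const}-\log a+\log r-\log\sinh r-\tfrac32\log t-\tfrac{r^2}{4t}$, whence
\[
\frac{X_ip_t}{p_t}=-X_i\log a+\Big(\frac1r-\frac{\cosh r}{\sinh r}-\frac{r}{2t}\Big)X_ir .
\]
Everything thus reduces to $X_i\log a$, which is $0$ for $i=1,2$ and $1$ for $i=0$, and to $X_ir$. To compute the latter I would differentiate \eqref{metrica}: applying $X_i=a\,\partial_{x_i}$ gives $\sinh r\,X_ir=x_i$, so $X_ir=x_i/\sinh r$ for $i=1,2$; applying $X_0=a\,\partial_a$ and simplifying with \eqref{metrica} gives $X_0\cosh r=a-\cosh r$, so $X_0r=(a-\cosh r)/\sinh r$. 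Substituting yields (i) at once, and (ii) after expanding the product and using $\cosh^2r/\sinh^2r-1=1/\sinh^2r$ to merge the constant term with $\cosh^2r/\sinh^2r$ into $1/\sinh^2r$.

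For the estimates I would write $X_ip_t=p_t\,Q_i-p_t\,\tfrac{r}{2t}\,R_i$, where $Q_i,R_i$ depend on $x$ alone and $R_i=X_ir$. Because $\sup_t|A+B|\le\sup_t|A|+\sup_t|B|$, it suffices to bound the two pieces separately: the first via \eqref{supt}, giving $\sup_t p_t\lesssim a^{-1}(r^2\sinh r)^{-1}$, and the second via the companion fact $\sup_{t>0}t^{-5/2}\nep^{-r^2/4t}\sim r^{-5}$, giving $\sup_t p_t\,\tfrac{r}{2t}\lesssim a^{-1}(r^3\sinh r)^{-1}$. It then remains to size $Q_i,R_i$ in the two regimes. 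For $x\in B_1$ one has $\sinh r\sim r$, $\cosh r\sim1$, and, reading off \eqref{metrica}, $a\sim1$, $|x|\lesssim r$, $|a-\cosh r|\lesssim r$ and $1/r-\coth r=O(r)$; hence all the $Q_i,R_i$ are $O(1)$, the $t^{-5/2}$ contribution dominates, and one obtains $\sup_t|X_ip_t|\lesssim r^{-4}$, which is (iii). For $x\in B_1^c$ one has $\sinh r\sim\cosh r\sim\nep^{r}$ and $|1/r-\coth r|\lesssim1$; keeping $|x_i|\le|x|$ explicit then gives (iv) directly from the two pieces above.

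The delicate case is (v), where the five terms of (ii) must be handled individually and the size of $a$ relative to $\cosh r$ is not fixed. Here I would use the elementary consequences of \eqref{metrica} that $\tfrac12(\cosh r)^{-1}\le a\le2\cosh r$ and $|\cosh r-a|\le\cosh r$. With these, the terms $\cosh r/(r\sinh r)$ and $\tfrac{r}{2t}(\cosh r-a)/\sinh r$ each contribute $\lesssim(ar^3\cosh r)^{-1}$, while $a/(r\sinh r)$, $a\cosh r/\sinh^2r$ and $1/\sinh^2r$ (the last bounded using $a^{-1}\le2\cosh r$) each contribute $\lesssim(r^2\cosh^2r)^{-1}$, and the sum is exactly the right-hand side of (v). This balancing of powers of $\cosh r\sim\sinh r$ against $a$ in the exponential-growth regime is the only real obstacle; the remainder is routine substitution.
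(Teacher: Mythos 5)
Your proposal is correct, and for parts (i), (ii), (iv) and (v) it follows essentially the route the paper takes (the paper outsources the computation of (i)--(ii) to \cite[Lemma 2.1]{SV} rather than redoing the logarithmic differentiation, and then, exactly as you do, combines (i), (ii), \eqref{heatkernel} and \eqref{supt} for the large-$r$ bounds; like you, it implicitly also needs the companion fact $\sup_{t>0}t^{-5/2}\nep^{-r^2/4t}\sim r^{-5}$ to absorb the $r/(2t)$ factor). The genuine divergence is in (iii): the paper does \emph{not} derive the local bound from the explicit formulas, but instead writes $p_t=\delta^{1/2}h_t$, so that $|X_ip_t|\lesssim \delta^{1/2}(|h_t|+|X_ih_t|)$, and quotes the small-time/small-distance estimates for the hyperbolic heat kernel $h_t$ and its gradient from \cite[Theorem 5.9, Corollary 5.49]{ADY}. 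Your alternative is self-contained and elementary, and it works — but note that it hinges on keeping the logarithmic derivative in the grouped form $Q_0=-1+\big(\tfrac1r-\coth r\big)X_0r$ rather than the expanded form (ii), since the individual terms $a/(r\sinh r)$, $\cosh r/(r\sinh r)$, $a\cosh r/\sinh^2r$ and $1/\sinh^2r$ are each of size $r^{-2}$ near the identity and only their combination is $O(1)$; together with $|x|\lesssim r$, $|a-\cosh r|\lesssim r$ and $\tfrac1r-\coth r=O(r)$ on $B_1$ (all correctly extracted from \eqref{metrica}), this gives $\sup_t|p_tQ_i|\lesssim r^{-3}$ and $\sup_t|p_t\tfrac{r}{2t}R_i|\lesssim r^{-4}$, hence (iii). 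What the paper's citation-based argument buys is brevity and robustness (the ADY gradient estimates are uniform statements about $h_t$ that do not require tracking cancellations); what your argument buys is independence from the external gradient estimates and a single unified mechanism for (iii)--(v).
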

\begin{proof}
Parts (i) and (ii) follow by direct computation from \cite[Lemma 2.1]{SV}. 

The proof of (iii) relies on the fact that $p_t=\delta^{1/2}\,h_t$ which implies that for $i=0,1,2,$ $|X_ip_t|\lesssim \delta^{1/2}\,(|h_t|+|X_ih_t|))$. From the estimates of $|h_t|$ and its gradient proved in \cite[Theorem 5.9, Corollary 5.49]{ADY}, (iii) follows. 

To prove (iv) and (v), it suffices to combine (i), (ii), \eqref{heatkernel} and  \eqref{supt} with $r=r(x)>1$.  
\end{proof}

For suitable functions $F$, we denote by $\|\nabla F\|$ the Euclidean norm
of the vector $\nabla F = (X_0F,\, X_1F,\,X_2F)$.

By applying the previous lemma, we shall estimate the integral  over the complement of a small ball $B_{r_1}$ of the function
\[
G_{r_1}(x) = \sup_{B(x,r_1/2)}\: \sup_{t>0} \,\|\nabla p_t\|.
\]

\begin{lemma}\label{intnablapt}
For any $r_1\in (0,2C_0)$ 
$$
\int_{B_{r_1}^{c}} G_{r_1}  \dir\leq \frac{C}{r_1}\,,
$$
where $C_0$ is the constant which appears in \eqref{C0}. 
\end{lemma}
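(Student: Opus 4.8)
The plan is to bound the supremum defining $G_{r_1}$ pointwise and then integrate, splitting the domain $B_{r_1}^c$ according to the size of $r=r(x)$ into a \emph{near} part $\{r_1\le r<T\}$ and a \emph{far} part $\{r\ge T\}$, where $T>1+C_0$ is a fixed constant. Two elementary comparabilities will make the supremum over $B(x,r_1/2)$ harmless. First, $r(\cdot)$ is the distance function and hence $1$-Lipschitz, while $\log a$ is also $1$-Lipschitz since $X_0\log a=1$ and $X_1\log a=X_2\log a=0$. Thus for every $y\in B(x,r_1/2)$ one has $|r(y)-r(x)|<r_1/2<C_0$ and $|\log a(y)-\log a(x)|<C_0$, so that $a(y)\sim a(x)$ always, and $r(y)\sim r(x)$, $\cosh r(y)\sim\cosh r(x)$ whenever $r(x)\ge T$. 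Moreover, since $x\in B_{r_1}^c$ we have $r(x)\ge r_1$, whence $r(y)\ge r(x)-r_1/2\ge r(x)/2$; in particular the supremum never reaches the singularity at the identity.

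For the near part I would invoke Lemma \ref{Xipt}(iii) at the points $y\in B(x,r_1/2)$ with $r(y)<1$, where $\sup_t\|\nabla p_t(y)\|\lesssim r(y)^{-4}\le(r(x)/2)^{-4}\lesssim r(x)^{-4}$, while at the points with $r(y)\ge1$ parts (iv)--(v) give a bound that is $O(1)$, because $a(y),|y|,r(y),\cosh r(y)$ all stay bounded once $r(x)<T$. Since $r(x)^{-4}\gtrsim T^{-4}$ on this region, both contributions are absorbed into $r(x)^{-4}$, so $G_{r_1}(x)\lesssim r(x)^{-4}$ there. Integrating, and using that $\dir\sim r^2\,\di r$ for $r<1$ (as $\rho(B_r)\sim r^3$), gives
\[
\int_{r_1\le r<T}G_{r_1}\dir\lesssim\int_{r_1}^{1}r^{-4}\,r^2\,\di r+O(1)=\int_{r_1}^1 r^{-2}\,\di r+O(1)\lesssim\frac1{r_1}.
\]
This region is precisely where the asserted $1/r_1$ blow-up originates.

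For the far part every $y\in B(x,r_1/2)$ has $r(y)\ge1$, so Lemma \ref{Xipt}(iv)--(v) apply. Using the comparabilities above together with the pointwise bound $|y|/a(y)\le\sqrt{2\cosh r(y)/a(y)}\lesssim\sqrt{\cosh r(x)/a(x)}$, which follows from $|x|^2=2a\cosh r-a^2-1$, I would obtain, with $r=r(x)$ and $a=a(x)$,
\[
G_{r_1}(x)\lesssim\frac{1}{r^2\cosh^{3/2}r\;a^{1/2}}+\frac{1}{a\,r^3\cosh r}+\frac{1}{r^2\cosh^2 r}.
\]
Each term is integrated by passing to the variables $r$ and $a$: writing $\dir=a^{-1}\,\di x_1\,\di x_2\,\di a$ and, in polar coordinates in the $(x_1,x_2)$-plane, $\di x_1\,\di x_2=2\pi\,a\sinh r\,\di r$, then integrating $a$ over $(\nep^{-r},\nep^{r})$ for fixed $r$, one has $\int_{\nep^{-r}}^{\nep^r}a^{-1}\di a=2r$, $\int_{\nep^{-r}}^{\nep^r}\di a=2\sinh r$, and $\int_{\nep^{-r}}^{\nep^r}a^{-1/2}\di a=4\sinh(r/2)$. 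This turns the three terms into the one-dimensional integrals $\int_T^\infty\frac{\sinh r\,\sinh(r/2)}{r^2\cosh^{3/2}r}\,\di r$, $\int_T^\infty\frac{\tanh r}{r^2}\,\di r$, and $\int_T^\infty\frac{\tanh^2 r}{r^2}\,\di r$, each of whose integrands is $O(r^{-2})$ as $r\to\infty$, so all converge. Hence the far part contributes $O(1)$, and combining with the near part yields $\int_{B_{r_1}^c}G_{r_1}\dir\lesssim 1/r_1+O(1)\lesssim 1/r_1$, the last step using $r_1<2C_0$.

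The main obstacle is the anisotropy of the gradient estimates: the factors $|x|/a$ and $1/a$ in (iv)--(v) are not radial, so the integration formula \eqref{intformula} cannot be applied directly. The remedy is the explicit $a$-integration over $(\nep^{-r},\nep^r)$ recorded above, which reduces every term to a convergent radial integral. A secondary technical point is to control the supremum over $B(x,r_1/2)$; this is exactly what the $1$-Lipschitz property of $r$ and $\log a$ supplies, guaranteeing that the relevant quantities at a nearby point $y$ are comparable to those at $x$.
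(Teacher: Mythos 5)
Your proof is correct, and its overall skeleton (control the supremum over $B(x,r_1/2)$ pointwise, then split $B_{r_1}^c$ into a near region contributing the $1/r_1$ and a far region contributing $O(1)$) matches the paper's. The treatment of the far region, however, is genuinely different and worth comparing. The paper keeps the non-radial factor from Lemma \ref{Xipt}\,(iv) in the form $\frac{1+a^{-1}|x|}{r^2\cosh^2 r}$, splits $B_1^c$ into $D_1=\{|x|^2>a^2+1\}$ and $D_2=B_1^c\setminus D_1$ according to which term dominates $\cosh r$, and computes each integral in Cartesian coordinates with the substitution $(x_1,x_2)=\sqrt a\,(v_1,v_2)$; only the radial first term of (v) is handled by the integration formula \eqref{intformula}. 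You instead eliminate the anisotropy at the outset via $|y|/a(y)\le\sqrt{2\cosh r(y)/a(y)}$ (a direct consequence of \eqref{metrica}), which turns all three bounds into functions of $r$ and $a$ alone, and then integrate in the coordinates $(r,a)$ with the Jacobian $\di x_1\di x_2=2\pi a\sinh r\,\di r$ and the explicit $a$-integrals over $(\nep^{-r},\nep^r)$. This avoids the $D_1/D_2$ case analysis entirely and reduces everything to three convergent one-dimensional integrals with integrands $O(r^{-2})$; the price is a slightly lossier bound on $X_1p_t$, $X_2p_t$ (a factor $\cosh^{1/2}r/a^{1/2}$ in place of $1+a^{-1}|x|$), which is harmless here. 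Your auxiliary observations are all sound: $\log a$ is indeed $1$-Lipschitz for $d$ (its gradient in the orthonormal frame $X_0,X_1,X_2$ has norm $1$), which gives $a(y)\sim a(x)$ on $B(x,r_1/2)$, and the range $a\in(\nep^{-r},\nep^r)$ for fixed $r$ is exactly right. The only point to state a bit more carefully is the near-region measure: the parenthetical ``$\rho(B_r)\sim r^3$'' does not by itself yield $\dir\sim r^2\,\di r$ on spheres; you should instead say, as the paper does, that $\dir$ is comparable to three-dimensional Lebesgue measure on $B_1$ (or run a dyadic-annulus argument using the monotonicity of $r^{-4}$). This is a presentational point, not a gap.
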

\begin{proof}
We first derive some pointwise estimates for $G_{r_1}$, closely similar to 
those of Lemma \ref{Xipt} (iii), (iv) and (v). Let 
$x = (x_1, x_2, a) \in B_{r_1}^{c}$. Any point in $B(x,r_1/2)$ can be
written $xy = (x_1+ay_1,\,x_2+ay_2,\, ab)$, where 
$y = (y_1, y_2, b) \in B_{r_1/2}$ so that $|y| \lesssim 1$ and $b \sim 1$. 
The triangle inequality then implies
$|r(xy) - r(x)| \le r_1/2 \le r(x)/2$ and thus $r(xy) \sim r(x)$ and 
$\cosh r(xy) \sim \cosh r(x)$. This allows us to conclude from Lemma 
\ref{Xipt} (iii) that
\[
G_{r_1}(x) \lesssim r^{-4}, \qquad x \in B_1\setminus B_{r_1} \tag{III},
\]
 with $r=r(x)$.  Next, we observe that $|x+ay| \lesssim |x|+a$ and see that (iv) implies
\[
 \sup_{B(x,r_1/2)}\: \sup_{t>0} \,\|X_i p_t\| \lesssim \frac{1 + a^{-1}|x|}{r^2 \cosh^2r}, \qquad i = 1,2 \tag{IV}.
\]
Moreover,  (v) implies
\[
\sup_{B(x,r_1/2)}\: \sup_{t>0} \,\|X_0 p_t\| \lesssim \frac{1}{a\,r^3\cosh r}+\frac{1}{r^2\cosh^2 r} \tag{V}.
\]

Using these three estimates, we shall integrate $G_{r_1}$ in $B_{r_1}^{c}$.
From (III) we get
\begin{equation}
 \label{localnablapt}
\int_{B_1 \setminus B_{r_1}} G_{r_1} \dir \lesssim \int_{B_1 \setminus B_{r_1}}
 r(x)^{-4}   \dir(x) \lesssim \frac{1}{r_1}\,,
 \end{equation}
since $\dir$ is equivalent to the 3-dimensional Lebesgue measure in $B_1$.

It remains to show that the integral over $B_1^c$ is finite. To do so, we split $B_1^c$ into two regions, $D_1=\{x\in B_1^c: |x|^2>a^2+1\}$ and $D_2=B_1^c\setminus D_1$. If $x\in D_1$, then \eqref{metrica} implies that $\cosh r(x)\sim a^{-1}|x|^2$ and $r(x)\sim \log\big(a^{-1}|x|^2\big)$. For $x\in D_2$, one has $\cosh r(x)\sim a+a^{-1}$ and $r(x)\sim \log\big(a+a^{-1}\big)$. 

For $i=1,2$, we apply (IV) to deal with $X_i p_t$. 
In $D_1$ one has $1 \le a^{-1}|x|$, so that 
$$
\begin{aligned}
&\int_{D_1}\sup_{B(x,r_1/2)} \sup_{t>0}|X_ip_t| \dir(x)
\\&\lesssim \int_0^{+\infty} \frac{\di a}{a}\int_{|x|>\sqrt{a^2+1}}
\frac{a^{-1}|x|}{\big(\log(a^{-1}|x|^2)\big)^{2}\,a^{-2}|x|^4}\,\di x_1\di x_2\,.
\end{aligned}
$$
Making the change of variables $(x_1,x_2)=\sqrt{a}\,(v_1,v_2)$, we get 
\begin{equation}\label{iD1}
\begin{aligned}
& \int_0^{+\infty}  \di a   \int_{|v|>\sqrt{a+a^{-1}}}
  \frac{1}{\big(\log |v|^2\big)^2\,\sqrt{a}\,|v|^3\, }\di v_1\di v_2   \\
&\lesssim \int_0^{+\infty}  \frac{1}{  \sqrt{a^2+1} \, 
   \big(\log(a+a^{-1})\big)^{2} }\di a\\ &<\infty\,.
\end{aligned}
\end{equation}
For the region $D_2$, we get
\begin{equation}\label{iD2}
\begin{aligned}
&\int_{D_2} \sup_{B(x,r_1/2)} \sup_{t>0}|X_ip_t| \dir(x) \\&\lesssim \int_0^{+\infty}\frac{\di a}{a} \int_{|x|<\sqrt{a^2+1}}
\frac{1+a^{-1}|x|}{\big(\log(a+a^{-1})\big)^{2}\,(a+a^{-1})^{2}} \di x_1\di x_2\\
&\lesssim \int_0^{+\infty} \frac{1}{\big(\log(a+a^{-1})\big)^{2}\,(a+a^{-1})^{2}}\,(a^2+1)^{3/2}\frac{\di a}{a^2}\\
& <\infty\,.
\end{aligned}
\end{equation}
To estimate the integrals involving $X_0p_t$, we apply (V). The integral over $B_1^c$ of the 
first term to the right in (V) can be computed by means of (\ref{intformula}) as follows:
\begin{equation}\label{0B1c}
\begin{aligned}
\int_{B_1^c} \frac{1}{a\,r(x_1,x_2,a)^3\,\cosh r(x_1,x_2,a)}\dir(x_1,x_2,a) 
&=\int_1^{\infty} \frac{1}{r^3\,\cosh r} r\,\sinh r {\rm{d}}r\\
&\lesssim 1\,.
\end{aligned}
\end{equation}
The last term in (V) is no larger than the right-hand side of (IV),
so its integral is finite by the above.

The estimates \eqref{localnablapt}, \eqref{iD1}, \eqref{iD2}  and \eqref{0B1c}  give the desired conclusion. 
\end{proof}

\section{Proof of part $(i)$ of Theorem \ref{H1L1bdd}} 

We first show that $\mathcal M$ is uniformly bounded on atoms.
\begin{proposition}\label{uniform}
There exists a positive constant $\kappa$ such that for any atom $A$ 
\begin{equation}\label{unif}
\|\mathcal M A\|_1\leq \kappa\,.
\end{equation}
\end{proposition}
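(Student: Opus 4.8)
The plan is to bound $\|\mathcal M A\|_1=\int_G\mathcal M A\dir$ by splitting $G$ into a bounded neighbourhood of the supporting \CZ set $R$ and its complement, and to treat \emph{small} \CZ sets (parameter $r<1$) separately from \emph{large} ones ($r\ge 1$); the latter will be the real difficulty. Write $R$ for the \CZ set supporting $A$, with center $x_R$ and parameters $L,r$, and set $r_1=2C_0r$ with $C_0$ as in \eqref{C0}, so that $R\subset B(x_R,C_0r)$. I first dispose of the \textbf{local part}, the integral over $B(x_R,r_1)$, using that $\mathcal M$ is bounded on $L^2$ (\cite{CGGM}) together with the size condition (ii): since $\|A\|_2^2\le\|A\|_\infty^2\,\rho(R)\le\rho(R)^{-1}$, Cauchy--Schwarz gives
\[
\int_{B(x_R,r_1)}\mathcal M A\dir\le\rho\big(B(x_R,r_1)\big)^{1/2}\,\|\mathcal M A\|_2\le C\,\rho\big(B(x_R,r_1)\big)^{1/2}\,\rho(R)^{-1/2}.
\]
For small \CZ sets one checks $\rho(B(x_R,r_1))\sim\rho(R)$ (on this small ball $\dir$ and $R$ are both comparable to a fixed multiple of Lebesgue measure at scale $r$), so the local part is $\lesssim 1$.

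For the \textbf{global part}, over $B(x_R,r_1)^c$ and still with $r<1$, I would use the cancellation (iii). By the last identity in \eqref{conv}, and since $\int A\dir=0$,
\[
A\ast p_t(x)=\delta(x)\int A(y)\,\big[p_t(x^{-1}y)-p_t(x^{-1}x_R)\big]\dir(y).
\]
To estimate the bracket I connect $e$ to $x_R^{-1}y$ by a geodesic $\gamma$ and integrate the derivative of $s\mapsto p_t\big((x^{-1}x_R)\gamma(s)\big)$; being a derivative along right translation, it is controlled by the left-invariant gradient $\nabla p_t$, so the bracket is at most the length of $\gamma$ times the supremum of $\|\nabla p_t\|$ over the path. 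For $y\in R$ this length is $d(x_R,y)\le C_0r=r_1/2$, and the path stays in $B(x^{-1}x_R,r_1/2)$; taking the supremum in $t$ gives
\[
\mathcal M A(x)\lesssim r\,\delta(x)\,G_{r_1}(x^{-1}x_R).
\]
Since $\delta(x)\dir(x)=\dil(x)$ is left-invariant, the substitution $x\mapsto x_Rx$ followed by inversion (which interchanges $\lambda$ and $\rho$ and preserves $B_{r_1}^c$, as $r(x^{-1})=r(x)$) transforms $\int_{B(x_R,r_1)^c}\delta(x)\,G_{r_1}(x^{-1}x_R)\dir(x)$ into $\int_{B_{r_1}^c}G_{r_1}\dir$. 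By Lemma \ref{intnablapt}, applicable precisely because $r_1<2C_0$, this is at most $C/r_1$, so the global part is $\lesssim r\cdot r_1^{-1}\lesssim 1$. This finishes the small-set case.

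The \textbf{main obstacle} is the case of large \CZ sets, $r\ge 1$. Then $r_1=2C_0r\ge 2C_0$ falls outside the range of Lemma \ref{intnablapt}, and, more fundamentally, the one-point linearization above becomes too crude: $R$ has diameter $\sim r$, so $p_t(x^{-1}y)$ cannot be compared to its value at a single center across all of $R$. Here I expect to need a separate argument that exploits the genuine exponential decay of $\sup_t\|\nabla p_t\|$ recorded in Lemma \ref{Xipt}(iv)--(v), together with the explicit shape of a large \CZ set (a thin slab in $a$ times a large box in $(x_1,x_2)$), rather than the uniform bound packaged in $G_{r_1}$. The local part is also delicate in this regime, since $\rho(B(x_R,r_1))$ is no longer comparable to $\rho(R)$ once $r_1$ is large and $\delta$ varies substantially over the ball; one would presumably retain the smaller neighbourhood $R^*$ (for which $\rho(R^*)\le C_0\rho(R)$) as the local region and absorb the remainder into a decay estimate.
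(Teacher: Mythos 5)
Your treatment of the case $r<1$ is correct and is essentially the paper's argument: the local part by $L^2$-boundedness of $\mathcal M$ and Cauchy--Schwarz, the global part by cancellation, a mean-value estimate of $|p_t(x^{-1}y)-p_t(x^{-1}x_R)|$ in terms of $r\,\sup_{B(x^{-1}x_R,\,r_1/2)}\|\nabla p_t\|$, and then Lemma \ref{intnablapt} after the change of variables you describe (the paper first translates the atom so that $R$ is centred at the identity, which is only a cosmetic difference). Your geodesic version of the mean-value step is fine.

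The genuine gap is the case $r\ge 1$, which you correctly identify as the main obstacle but do not resolve; in the paper this case occupies most of the proof. What is actually done there: after centring $R=[-L/2,L/2]^2\times[\nep^{-r_0},\nep^{r_0}]$ at the identity, one takes as local region not $R^*$ but the larger set $R^{**}=\{|x|<\gamma L,\ 1/L<a<\gamma L\}$, which still has $\rho(R^{**})\sim\rho(R)$ because $\log L\sim r_0$, and splits the complement into three regions: $\Omega_1$ (below the slab, $a<1/L$), $\Omega_2$ (above, $a>\gamma L$, $|x|<a$) and $\Omega_3$ ($|x|>\max(a,\gamma L)$). Two points you did not anticipate are essential. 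First, on $\Omega_1$ no cancellation is used at all: the size estimate $\sup_t p_t(x^{-1}y)\lesssim a^2\big((\log a^{-1})^2(b^2+|x-y|^2)\big)^{-1}$ alone gives a convergent integral $\int_0^{1/L}\frac{\di a}{a(\log a^{-1})^2}\lesssim (\log L)^{-1}\sim r_0^{-1}$, which beats the factor $r_0$ coming from the thickness of the slab. Second, on $\Omega_2$ and $\Omega_3$ the single-centre linearization is replaced by a two-step telescoping through the intermediate point $\tilde y=(y_1,y_2,1)$, writing $p_t(x^{-1}y)-p_t(x^{-1})=\big[p_t(x^{-1}y)-p_t(x^{-1}\tilde y)\big]+\big[p_t(x^{-1}\tilde y)-p_t(x^{-1})\big]$, so that the horizontal increment (of length $\lesssim L$) is paired with the $X_1,X_2$ estimate of Lemma \ref{Xipt}(iv) and the vertical increment (of length $\lesssim r_0$) with the $X_0$ estimate of Lemma \ref{Xipt}(v); the resulting integrals produce factors such as $L\cdot(\log L)^{-1}\cdot L^{-1}$ and $r_0\cdot(\log L)^{-2}$ which are bounded precisely because $\log L\sim r_0$. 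Without this anisotropic splitting and the balance between $L$, $r_0$ and $\log L$ imposed by Definition \ref{Czsets}, the large-set case does not close, so your proposal as written proves the proposition only for atoms supported in small \CZ sets.
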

\begin{proof}
We first notice that it is sufficient to prove the inequality \eqref{unif} for atoms supported in \CZ sets centred at the identity. 
Indeed, if $B$ is an atom supported in a \CZ set $R$ centred at a point $c_R$, then the function $A$ defined by 
$A(x)=\delta(c_R)^{-1}B(c_Rx)$ is an atom supported in the set $c_R^{-1}\cdot R$ which is centred at the identity. Moreover, 
it is easy to check that $\|\mathcal M B\|_1=\|\mathcal M A\|_1$. 

\smallskip

Let us thus suppose that $A$ is an atom supported in the \CZ set $R=Q\times [\nep^{-r_0},\nep^{r_0}]=[-L/2,L/2]\times [-L/2,L/2]\times [\nep^{-r_0},\nep^{r_0}]$. We study the cases when $r_0<1$ and $r_0\geq 1$. 

\smallskip

\noindent{\emph{Case $r_0<1$.}}  By \eqref{C0} $R\subset B_{C_0r_0}$. Since $r_0<1$, the set $\tilde R=B_{2C_0r_0}$ 
has measure comparable with $\rho(R)$, and we get
\begin{equation}\label{local}
\|\mathcal M A\|_{L^1(\tilde{R})}\leq \rho(\tilde{R})^{1/2}\,\|\mathcal M\|_{L^2\rightarrow L^2}\,\|A\|_2\lesssim 
\rho(R)^{1/2}\,\|\mathcal M\|_{L^2\rightarrow L^2}\,\rho(R)^{-1/2}\lesssim 1\,.
\end{equation}
It remains to consider $\mathcal M A$ on the complement of $\tilde{R}$. For any point $x$ in $\tilde{R}^c$, we obtain from \eqref{conv} and the cancellation condition of the atom  
\begin{equation}
 \label{ppp}
A\ast p_t(x)=\delta(x)\int_R A(y)\,[p_t(x^{-1}y)-p_t(x^{-1})]\dir(y).
\end{equation}
Let $y$ be any point in the ball $B(e,C_0r_0).$ From the Mean Value Theorem in 
$\RR^3$, we obtain
$$
|p_t(y)-p_t(e)| \lesssim\, r_0 \sup_{B(e,C_0r_0)}\|\nabla p_t\|.
$$
Replacing here $p_t$ by its translate $p_t(x^{-1}\cdot)$ and using the left invariance of $\nabla$, we conclude 
$$
|p_t(x^{-1}y)-p_t(x^{-1})| \lesssim\, r_0 \sup_{B(x^{-1},C_0r_0)}\|\nabla p_t\|.
$$
Since $\|A\|_1 \le 1$, we conclude from this and (\ref{ppp}) that
$$
|A*p_t(x)| \lesssim \, r_0 \delta(x) \sup_{B(x^{-1},C_0r_0)}\|\nabla p_t\|,
$$
and so
$$
|\mathcal M A(x)| \lesssim \, r_0 \delta(x) 
\sup_{B(x^{-1},C_0r_0)}\, \sup_{t>0}\|\nabla p_t\|.
$$
We now integrate over $x \in \tilde{R}^c$, inverting the variable $x$. Then
Lemma \ref{intnablapt} with $r_1 = 2C_0r_0$ leads to
$$
\int_{\tilde{R}^c} |\mathcal M A(x)| \dir (x) \lesssim 1,
$$
which ends the case $r_0 < 1$.

\smallskip

\noindent{\emph{Case $r_0\geq 1$.}} In this case we define $R^{**}=
\{ x:~|x|<\gamma\,L,\,\frac{1}{L}<a<\gamma\,L \}$, where $\gamma>1$ is a suitable constant. Then $\rho(R^{**})$ is comparable to 
$\rho(R)$, and there exists a constant $C$ such that
\begin{equation}\label{local2}
\|\mathcal M A\|_{L^1({R}^{**})}\leq \rho({R}^{**})^{1/2}\,\|\mathcal M\|_{L^2\rightarrow L^2}\,\|A\|_2\leq 
C\,\rho(R)^{1/2}\,\|\mathcal M\|_{L^2\rightarrow L^2}\,\rho(R)^{-1/2}\leq C\,.
\end{equation}
It remains to consider $\mathcal M A$ on the complementary set of ${R}^{**}$. We write $(R^{**})^c=\Omega_1\cup\Omega_2\cup\Omega_3$, where
$$
\begin{aligned}
\Omega_1&=\{ x\in G:a<\frac{1}{L},\,|x|  <\gamma\,L\}\,,\\
\Omega_2&=\{x\in G:a>\gamma{L},\,|x|<a\}\,,\\
\Omega_3&=\{x\in G:|x|>\max(a,\gamma L)\}\,.
\end{aligned}
$$
In \eqref{ppp} we write $y=(y_1,y_2,b)\in R$ and $x=(x_1,x_2,a)$ so that $x^{-1}y=\big(a^{-1}(y_1-x_1), a^{-1}(y_2-x_2),a^{-1}b\big)$.  
We now study $\mathcal M A$ on $\Omega_1,\,\Omega_2,\,\Omega_3$ separately.

For $x\in\Omega_1$ and $y\in R$, \eqref{metrica} implies  
$$
\cosh r(x^{-1}y) \geq \frac{a^{-1}b+a^{-1}b^{-1}|x-y|^2}{2} \geq \frac{a^{-1}b}{2}\geq \frac{a^{-1/2}}{2}\geq \frac{\nep}{2}\,,
$$
the last two inequalities since $b\geq \nep^{-r_0}>L^{-1/2}>a^{1/2}$ and $a^{-1/2}>\nep^{r_0}$, by the definitions of $\Omega_1$ and \CZ sets.  
 Therefore, by \eqref{supt}
$$
\begin{aligned}
\sup_{t>0}p_t(x^{-1}y)  &\lesssim  \delta^{1/2}(x^{-1}y)\,r(x^{-1}y)^{-3}\,\frac{r(x^{-1}y)}{\sinh r(x^{-1}y)}\\
&\lesssim (a^{-1}b)^{-1} \frac{(\log a^{-1})^{-2}}{a^{-1}b^{-1}(b^2+|x-y|^2)}\\
&= \frac{a^{2}}{ (\log a^{-1})^{2}(b^2+|x-y|^2)}\,.
\end{aligned}
$$
This implies that for all $x$ in $\Omega_1$,
$$
\begin{aligned}
\sup_{t>0}|A\ast p_t (x)|&= \delta(x)\sup_{t>0}\Big|\int_R A(y)\,p_t(x^{-1}y)\dir(y) \Big|\\
&\lesssim \delta(x)\, \|A\|_{\infty }\int_R  \sup_{t>0}p_t(x^{-1}y)\dir(y)\\
&\lesssim \delta(x)\,\rho(R)^{-1}\frac{a^2}{(\log a^{-1})^2}\times\\
&\times\int_{\nep^{-r_0}}^{\nep^{r_0}}\int_{|y_1|<L/2}\int_{|y_2|<L/2} 
\frac{1}{b^2+|x-y|^2}\di y_1\di y_2\frac{\di b}{b}\\
&\lesssim a^{-2}\,(L^2r_0)^{-1}\frac{a^2}{(\log a^{-1})^2}\int_{\nep^{-r_0}}^{\nep^{r_0}} \log\Big(\frac{L}{b}\Big)\frac{\di b}{b}\\
&\lesssim \frac{r_0}{L^2(\log a^{-1})^2}\,.
\end{aligned}
$$
By taking the integral over the region $\Omega_1$, we obtain that
\begin{equation}\label{Omega1}
\begin{aligned}
\int_{\Omega_1}\mathcal M A(x)\dir (x)&\lesssim  \frac{r_0}{L^2}\,\int_0^{1/L}\frac{\di a}{a\,(\log a^{-1})^2}\int_{|x|<\gamma L}\di x_1\di x_2\\
&\lesssim \frac{r_0}{\log L}\sim 1\,.
\end{aligned}
\end{equation}

\smallskip
In order to study the integral of $\mathcal M A$ on the remaining regions, we use the cancellation condition of the atom and write 
$$
\begin{aligned}
A\ast p_t(x)&=\delta(x)\int_RA(y)\,[p_t(x^{-1}y)- p_t(x^{-1}\tilde y) +p_t(x^{-1}\tilde y)-p_t(x^{-1})]\dir(y)\,,
\end{aligned}
$$
where for $y=(y_1,y_2,b)\in R$ we write $\tilde y=(y_1,y_2,1)$ and thus 
$y=\tilde{y} \,{\rm{exp}}(\log b\,X_0)  $. Setting $q_1(x)=\sup_{t>0,\,y\in R}|p_t(x^{-1}y)- p_t(x^{-1}\tilde y)|$ and $q_2(x)=\sup_{t>0,\,y\in R}|p_t(x^{-1}\tilde y)-p_t(x^{-1})|$, we conclude that
$$
\mathcal MA(x)\leq \delta(x)\,\big(q_1(x)+q_2(x)\big)\,\int_R|A(y)|\, \dir(y)\leq \delta(x)\,\big(q_1(x)+q_2(x)\big)\,.
$$ 

We first estimate $q_2$. Observe that $\tilde{y}={\rm{exp}}(y_1X_1+y_2X_2) $, so that
\begin{equation*} 
p_t(x^{-1}\tilde y) -p_t(x^{-1})=\int_0^{1}(y_1X_1+y_2X_2)p_t\big(x^{-1} {\rm{exp}}(sy_1X_1+sy_2X_2)  \big)\di s\,.
\end{equation*}
Here $x^{-1} {\rm{exp}}(sy_1X_1+sy_2X_2)=\big( a^{-1}(sy_1-x_1), a^{-1}(sy_2-x_2), a^{-1} \big)$ and 
$$
2\cosh r\big( x^{-1} {\rm{exp}}(sy_1X_1+sy_2X_2) \big)=
a^{-1}+a+a\,a^{-2} |sy-x|^2\,,
$$ 
because of \eqref{metrica}. We consider $0<s<1$ and $y\in R$, which implies that $|y_1|, |y_2|\leq L/2$. 

For $x\in \Omega_2$ we get $|sy-x|\lesssim a$, $\cosh r \big( x^{-1} {\rm{exp}}(sy_1X_1+sy_2X_2) \big)\sim a$ and also $r \big( x^{-1} {\rm{exp}}(sy_1X_1+sy_2X_2) \big)\sim \log a$, so that Lemma \ref{Xipt} (iv) implies that
$$
\big|  (y_1X_1+y_2X_2)p_t\big( x^{-1} {\rm{exp}}(sy_1X_1+sy_2X_2) \big)\big|\lesssim L\frac{a\,a^{-1}\,a}{a^2(\log a)^2}=\frac{L}{a\,(\log a)^2}\,.
$$
Thus
\begin{equation}\label{Omega2q2}
\begin{aligned}
\int_{\Omega_2}\delta(x)\,q_2(x)\dir(x)&\lesssim L\,\int_{\Omega_2}\frac{1}{a^3(\log a)^2}\dir(x)\\
&\lesssim L\,\int_{\gamma L}^{\infty}\frac{1}{a^3(\log a)^2}\int_{|x|<a}\di x_1\di x_2\frac{\di a}{a}\\
&\lesssim L\,\int_{\gamma L}^{\infty}\frac{\di a}{a^2(\log a)^2}\\
&\lesssim \frac{1}{(\log L)^2}\,.
\end{aligned}
\end{equation}

For $x\in\Omega_3$ we have $|x|>\gamma\,L$, so that $|sy-x|\sim |x|\geq a$, and it follows that $\cosh r   \big( x^{-1} {\rm{exp}}(sy_1X_1+sy_2X_2) \big)\sim a^{-1}|x|^2$. Then Lemma \ref{Xipt} (iv) implies
$$
\big|  (y_1X_1+y_2X_2)p_t\big( x^{-1} {\rm{exp}}(sy_1X_1+sy_2X_2) \big)\big|\lesssim L\frac{a\,a^{-1}\,|x|}{a^{-2}|x|^4\,\big(\log(a^{-1}|x|^2)\big)^2}\,.
$$
Thus
\begin{equation}\label{Omega3q2}
\begin{aligned}
\int_{\Omega_3}\delta(x)\,q_2(x)\dir(x)&\lesssim L\,\int_{\Omega_3}\frac{1}{|x|^3\big(\log (a^{-1}|x|^2\big)^2}\di x_1\di x_2\frac{\di a}{a}\\
&\lesssim L\,\int_{|x|>\gamma L}\frac{\di x_1\di x_2}{|x|^3}\int_{a<|x|}
\frac{\di a}{a\big(\log (a^{-1}|x|^2\big)^2}\\
& \lesssim L\int_{|x|>\gamma L}\frac{\di x_1\di x_2}{|x|^3\log |x|}\\
&\lesssim \frac{1}{\log L}\,.
\end{aligned}
\end{equation}
To estimate $q_1$, we write
$$
p_t(x^{-1}y)-p_t(x^{-1}\tilde y)=\log b\,\int_0^{1}X_0p_t\big(x^{-1}\tilde y \, {\rm{exp}}(s\log b\,X_0)  \big)\di s\,.
$$
Here $x^{-1}\tilde y\,{\rm{exp}}(s\log b\,X_0) =\big(a^{-1}(y_1-x_1), a^{-1}(y_2-x_2), 
a^{-1}b^s   \big) $, and so 
$$
2\cosh r\big( x^{-1}\tilde y\,{\rm{exp}}(s\log b\,X_0)  \big)=
a^{-1}b^s+a\,b^{-s}+a\,b^{-s}a^{-2}|y-x|^2\,.
$$

For $x\in\Omega_2$, we get $|y-x|\lesssim a$ and $\cosh r \big( x^{-1}\tilde y\,{\rm{exp}}(s\log b\,X_0)  \big)  \sim a\,b^{-s}$ which implies $r\big( x^{-1}\tilde y\,{\rm{exp}}(s\log b\,X_0)  \big)\sim \log a$ because 
$a>\gamma L>\gamma\nep^{2r_0}$ and $\nep^{-r_0}\leq b^{-s}\leq \nep^{r_0}$\,. Then Lemma \ref{Xipt} (v) implies that
$$
\begin{aligned}
\big| X_0p_t\big(x^{-1}\tilde y\,{\rm{exp}}(s\log b\,X_0)\big)  \big| &\lesssim \frac{1}{a^{-1}b^s\,a\,b^{-s}\,(\log a)^3}+
\frac{1}{a^2\,b^{-2s}(\log a)^2}\\
&\lesssim \frac{1}{(\log a)^3}\,.
\end{aligned}
$$
Thus
\begin{equation}\label{Omega2q1}
\begin{aligned}
\int_{\Omega_2}\delta(x)\,q_1(x)\dir(x)&\lesssim r_0\int_{\Omega_2}\frac{1}{a^2(\log a)^3}\dir(x)\\
&\lesssim  r_0\int_{a>\gamma L}\frac{1}{a^3(\log a)^3} \int_{|x|<a}\di x_1\di x_2\di a\\
&\lesssim r_0\,\frac{1}{(\log L)^2}\\
&\lesssim \frac{1}{r_0}\,.
\end{aligned}
\end{equation}
For $x\in \Omega_3$ we have $|y-x|^2\sim |x|^2$ and $\cosh r\big( x^{-1}\tilde y\,{\rm{exp}}(s\log b\,X_0)  \big)\sim a\,b^{-s}\,a^{-2}|x|^2 $. It follows that $\log(a^{-1}|x|^2)\lesssim r\big( x^{-1}\tilde y\,{\rm{exp}}(s\log b\,X_0)  \big)$. Lemma \ref{Xipt} (v) now implies
$$
\begin{aligned}
\big| X_0p_t\big(x^{-1}\tilde y\,{\rm{exp}}(s\log b\,X_0)\big)\big|&\lesssim \frac{1}{a^{-1}\,b^s\,a^{-1}\,b^{-s}|x|^2(\log(a^{-1}|x|^2)^3} 
+\frac{1}{a^{-2}\,b^{-2s}|x|^4(\log a^{-1}|x|^2  )^2}\\
&\lesssim  \frac{a^2}{|x|^2(\log(a^{-1}|x|^2)^3}+\frac{a^2}{|x|^3(\log(a^{-1}|x|^2)^2}\,;
\end{aligned}
$$
in the last term we have used the inequalities $b^{2s}\leq \nep^{2r_0}<L<|x|$. 
Thus
\begin{equation}\label{Omega3q1}
\begin{aligned}
\int_{\Omega_3}\delta(x)\,q_1(x)\dir(x)&\leq r_0\int_{|x|>\gamma L} \frac{1}{|x|^2}\int_{a<|x|} \frac{\di a}{a(\log(a^{-1}|x|^2)^3} \di x_1\di x_2\\
&+r_0\int_{|x|>\gamma\,L} \frac{1}{|x|^3}\int_{a<|x|} \frac{\di a}{a(\log(a^{-1}|x|^2)^2} \di x_1\di x_2\\
&\leq r_0\int_{|x|>\gamma L} \frac{\di x_1\di x_2}{|x|^2 (\log |x|)^2} \di x_1\di x_2\\
&+r_0\int_{|x|>\gamma\,L} \frac{\di x_1\di x_2}{|x|^3\,\log|x|} \\
&\leq \frac{r_0}{\log L}+\frac{r_0}{L\log L}\\
&\lesssim 1\,.
\end{aligned}
\end{equation}
Combining the above, we obtain the estimate $\int \mathcal M A\dir\lesssim 1$ in the case $r_0\geq 1$, which completes the proof of Proposition \ref{uniform}. 
\end{proof}

We can now prove part (i) of Theorem \ref{H1L1bdd}. 
 
\begin{proof}
Take a function $f\in H^1$, and let us prove that $\mathcal Mf\in L^1$. There exists a sequence of atoms $\{A_j\}_j$ and complex numbers $\{\lambda_j\}_j$ such that 
$f=\sum_{j=1}^{\infty}\lambda_jA_j$ and $\sum_{j=1}^{\infty}|\lambda_j|\leq 2\|f\|_{H^1}$\,. By \eqref{conv}
$$
f\ast p_t(x)
= \int f(y)\,\delta(x)\,p_t(x^{-1}y) \dir(y)\,.
$$
In view of \cite[Theorem 2.1]{CGGM}, the heat kernel $p_t$ is for each $t>0$ a bounded function on  $G$.  Thus for each $t>0$ and $x\in G$ the function $y\mapsto \delta(x)\,p_t(x^{-1}y)$ is in $L^{\infty}$. But then 
$$
\begin{aligned}
\int f(y) \delta(x)\,p_t(x^{-1}y) \dir(y)&= 
\lim_{N\rightarrow \infty}\sum_{j=1}^N\int  \lambda_j\,A_j(y)\,   \delta(x)\,p_t(x^{-1}y) \dir(y)\,,
\end{aligned}
$$
and so
$$
|f\ast p_t(x)|\leq \sum_{j=1}^{\infty}|\lambda_j|\,|A_j\ast p_t(x)|\,.
$$
Taking the supremum in $t$, we get
$$
\mathcal M f(x)\leq  \sum_{j=1}^{\infty}|\lambda_j| \mathcal M A_j(x)\,,
$$
and the result now follows from Proposition \ref{uniform}.  
\end{proof}

\section{Proof of Part $(ii)$ of Theorem \ref{H1L1bdd} }\label{maxchar}

Even though the two statements in Part (ii) are equivalent, we choose to prove both, by means of simple, explicit examples. First we recall the definition and some properties of the space $BMO$ (see \cite{V} for details). 
\begin{definition}
The space $\mathcal{B}\mathcal{M}\mathcal{O}$ is the space of all functions in $L^1_{\rm{loc}}$ such that
$$\sup_{R\in\mathcal R}\frac{1}{\rho(R)}\int_R|g-g_R|\dir <\infty\,,$$
where $g_R$ denotes the mean value of $g$ in the set $R$, and $\mathcal R$ is the family of \CZ sets. The space $BMO$ is $\mathcal{B}\mathcal{M}\mathcal{O}$ modulo the subspace of constant functions. It is a Banach space endowed with the norm 
$$\|g\|_{BMO}=\sup\Big\{\frac{1}{\rho(R)}\int_R|g-g_R|\dir :~R \in\mathcal R\Big\}\,.$$
\end{definition}
The space $BMO$ can be identified with the dual of the Hardy space $H^1$. More precisely, for any $g$ in $BMO$ the functional $\ell$ defined on any atom $A$ by
$$\ell(A)=\int g\,A\dir  \,,$$
extends to a bounded functional on $H^{1}$. Furthermore, any functional in the dual of $H^1$ is of this type, and 
$\|\ell\|_{(H^1)^*}\sim\|g\|_{BMO}$. Given  functions $g$ in $BMO$ and $f$ in $H^1$ we shall denote by $\langle g,f\rangle$ the action of $g$ on $f$ in this duality. 

Aiming at Theorem \ref{H1L1bdd} (ii), we shall construct a family of functions $\{f_L\}_{L>2}$ in $H^1$ such that
$$
\lim_{L\rightarrow +\infty}\frac{\|f_L\|_{H^1}}{\|\mathcal Mf_L\|_1}=+\infty\,.
$$ 
Fix $L>2$ and consider the rectangles $R_0=[-1,1]\times[-1,1]\times[\frac{1}{\nep},\nep]$ and 
$R_L=(L,0,1)\cdot R_0=[L-1,L+1]\times[-1,1]\times[\frac{1}{\nep},\nep]$. We then define  $f_L=\chi_{R_L}-\chi_{R_0}$. Obviously $f_L$ is a multiple of an atom, so it lies in the Hardy space $H^1$. We shall estimate the $H^1$-norm of the function $f_L$ from below, by applying the duality between $H^1$ and $BMO$.

\begin{lemma}\label{fLH1}
There exists a positive constant $C$ such that
$$
\|f_L\|_{H^1}\geq C\,\log L\qquad \forall L>2\,.
$$
\end{lemma}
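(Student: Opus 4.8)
The plan is to bound $\|f_L\|_{H^1}$ from below through the duality $(H^1)^*=BMO$ recalled above. Since that duality gives $|\langle g,f_L\rangle|\le\|\ell\|_{(H^1)^*}\,\|f_L\|_{H^1}\lesssim\|g\|_{BMO}\,\|f_L\|_{H^1}$ for every $g\in BMO$, it suffices to exhibit a \emph{single} function $g\in BMO$, independent of $L$, with $\|g\|_{BMO}\lesssim 1$ and
$$\langle g,f_L\rangle=\int g\,f_L\dir=\int_{R_L}g\dir-\int_{R_0}g\dir\gtrsim\log L.$$
Then $\|f_L\|_{H^1}\gtrsim|\langle g,f_L\rangle|/\|g\|_{BMO}\gtrsim\log L$, which is the assertion. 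The pairing is well defined because $f_L$ is a compactly supported bounded function of vanishing $\rho$-integral (indeed $\rho(R_0)=\rho(R_L)=8$, so $\int f_L\dir=0$), so that $f_L$ is a multiple of an atom.

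The candidate I would use is $g(x)=\log(1+|x_1|)$, depending on the first coordinate alone. The point of this choice is that it separates the two rectangles: on $R_L$ one has $x_1\in[L-1,L+1]$, whence $g(x)\ge\log L$, while on $R_0$ one has $|x_1|\le1$, whence $0\le g(x)\le\log 2$. Since $\rho(R_0)=\rho(R_L)=8$, this yields at once
$$\int_{R_L}g\dir-\int_{R_0}g\dir\ \ge\ 8\log L-8\log 2\ \gtrsim\ \log L\qquad(L>2),$$
the required lower bound for the pairing.

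The main work, and the step where care is needed, is to check that $g\in BMO$ with norm $\lesssim 1$, uniformly. Here I would exploit the explicit product structure of the \CZ sets: every $R\in\mathcal R$ is a product $I_1\times I_2\times I_a$ of three intervals, and the measure $\dir=a^{-1}\di x_1\di x_2\di a$ factors accordingly. Since $g$ depends on $x_1$ only, Fubini shows that its mean and mean oscillation over $R$ coincide with the one-dimensional mean and mean oscillation of $t\mapsto\log(1+|t|)$ over the interval $I_1$ with respect to $\di t$; that is,
$$\frac1{\rho(R)}\int_R|g-g_R|\dir=\frac1{|I_1|}\int_{I_1}\bigl|\log(1+|t|)-\bigl(\log(1+|\cdot|)\bigr)_{I_1}\bigr|\di t.$$
Because $t\mapsto\log(1+|t|)$ belongs to $BMO(\RR)$, the right-hand side is bounded by an absolute constant for \emph{every} interval $I_1$, a fortiori for those arising from \CZ sets; hence $\|g\|_{BMO}\lesssim 1$.

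Finally I would assemble the two estimates through the duality inequality to conclude $\|f_L\|_{H^1}\gtrsim\log L$. A more geometric alternative would be to take $g=r(\cdot)$, the distance from the identity, which is comparable to $2\log L$ on $R_L$ and to a constant on $R_0$; but verifying $r\in BMO$ requires controlling its mean oscillation over \emph{large} \CZ sets, where the full oscillation of $r$ is itself of size $\sim\log L$ and a genuine averaging argument is needed. The coordinate function $\log(1+|x_1|)$ avoids this difficulty by reducing everything to the classical one-dimensional fact, so I expect it to give the cleanest route.
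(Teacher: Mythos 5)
Your proof is correct and follows essentially the same route as the paper: a duality argument against a $BMO$ function depending on $x_1$ alone, with the product structure of the Calder\'on--Zygmund sets reducing the $BMO$ bound to the classical one-dimensional fact. The only (immaterial) difference is your choice of $\log(1+|x_1|)$ in place of the paper's $\log|x_1|$.
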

\begin{proof}
We consider the function $h$ in $BMO(\mathbb R)$ given by $h(s)=\log|s|$ for all $s$ in $\mathbb R$ and define $g(x_1,x_2,a)=h(x_1)=\log |x_1|$\,.  First we observe that $g$ is in $BMO$. Indeed, let $R=Q_1\times Q_2\times I$ be a Calder\'on--Zygmund set, where $Q_1$, $Q_2$ and $I$ are intervals of the form given by Definition \ref{Czsets}. One finds that $g_R$ coincides with the mean $h_{Q_1}$ of $h$ in $Q_1$, and so 
$$\frac{1}{\rho(R)}\int_R|g-g_R|\dir=\frac{1}{|Q_1|}\int_{Q_1}|h(x_1)-h_{Q_1}|\di x_1\leq \|h\|_{BMO(\mathbb R)}\,.
$$
This implies that $g$ is in $BMO$ and $\|g\|_{BMO}=\|h\|_{BMO(\mathbb R)}$. 

Since $f_L$ is a multiple of an atom, $\langle g,f_L\rangle=\int g\,f_L\dir$, and it is easy to verify that 
$\Big|\int g\,f_L\dir \Big|\geq C\log L\,.$
On the other hand, $\big|\langle g,f_L\rangle\big|\lesssim \|g\|_{BMO}\,\|f_L\|_{H^1}$. The lemma follows.

\end{proof}

\begin{remark}

The estimate of Lemma \ref{fLH1} is sharp, which can be seen as follows. With $J=\big[(\log  L)/2)\big]$, we shall write the function $f_L$ as a linear combination of $2J+1\sim\log L$ atoms. The intuitive idea is to construct a chain of atoms which reach the same "height" $~L/2$ in the $a$-variable as the geodesics connecting  the sets $R_0$ and  $R_L$. More precisely, for  $j=0,1,...,J-1$ set 
$$
P_j=(0,0, \nep^{2j})\cdot R_0=(-\nep^{2j},\nep^{2j})\times (-\nep^{2j},\nep^{2j})\times (\nep^{2j-1},\nep^{2j+1})\,,
$$
and define the functions  
$$
A_j=\frac{\rho(R_0)}{\rho(P_j)}\chi_{P_j}-\frac{\rho(R_0)}{\rho(P_{j+1})}\chi_{P_{j+1}}\,.
$$
Similarly, set 
$$
Q_j=(L,0, 1)\cdot P_j=(L-\nep^{2j},L+\nep^{2j})\times (-\nep^{2j},\nep^{2j})\times (\nep^{2j-1},\nep^{2j+1})\,,
$$
and define the functions 
$$
B_j=\frac{\rho(R_L)}{\rho(Q_j)}\chi_{Q_j}-\frac{\rho(R_L)}{\rho(Q_{j+1})}\chi_{Q_{j+1}}\,.
$$
Notice that $\rho(P_j)=\rho(Q_j)\sim\,\nep^{4j}$ for each $j$, and it is easy to see that there exist \CZ sets $\tilde{P}_j$ and  $\tilde{Q}_j$ such that $\rho(\tilde{P}_j)=\rho(\tilde{Q}_j)\sim e^{4j}$ and
$$
{\rm{supp}}(A_j)=P_j\cup P_{j+1}\subset \tilde{P}_j\qquad {\rm{and}}
\qquad {\rm{supp}}(B_j)=Q_j\cup Q_{j+1}\subset \tilde{Q}_j\,.
$$
Thus, $A_j$ and $B_j$ are multiples of atoms. Moreover, $P_{J}\cup Q_J$ is contained in a \CZ set whose measure is comparable with $\rho(P_J)$, and thus $\frac{\rho(R_0)}{\rho(P_{J})} \big[ \chi_{P_{J}}-\chi_{Q_{J}}\big]$ is also a multiple of an atom. 

We conclude that
$$
f_L=\sum_{j=0}^{J-1} B_j-\sum_{j=0}^{J-1}A_j-\frac{\rho(R_0)}{\rho(P_{J})} \big[ \chi_{P_{J}}-\chi_{Q_{J}}\big]\,,
$$
so that $\|f_L\|_{H^1}\sim \log L\,$.

\end{remark}

\begin{proposition}\label{MfL}
There exists a positive constant $C$ such that for $L>C$
$$
\|\mathcal M f_L\|_{1}\lesssim \,\log\log L\,.
$$
\end{proposition}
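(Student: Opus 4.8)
The plan is to reduce everything to the behaviour of $\chi_{R_0}\ast p_t$ and its translate, and to extract the cancellation encoded in $R_L=c_L\cdot R_0$, where $c_L=(L,0,1)$. Since convolution (on the right) commutes with left translation and $\chi_{R_L}=\chi_{R_0}(c_L^{-1}\,\cdot\,)$, I would first write $f_L\ast p_t(x)=F_t(c_L^{-1}x)-F_t(x)$ with $F_t=\chi_{R_0}\ast p_t$, and then, inserting $F_t(z)=\int_{R_0}p_t(zy^{-1})\dir(y)$,
\[
f_L\ast p_t(x)=\int_{R_0}\big[p_t(c_L^{-1}xy^{-1})-p_t(xy^{-1})\big]\dir(y).
\]
The decisive feature is that $c_L^{-1}=(-L,0,1)$ acts by a pure shift of the first coordinate and so leaves the $a$-coordinate, hence $\delta^{1/2}$, unchanged. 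Writing $p_t=\delta^{1/2}\,\Psi_t(r)$ with $\Psi_t(r)=\frac{1}{8\pi^{3/2}}\frac{r}{\sinh r}t^{-3/2}\nep^{-r^2/4t}$, this yields
\[
f_L\ast p_t(x)=\int_{R_0}\delta^{1/2}(xy^{-1})\big[\Psi_t(r(c_L^{-1}xy^{-1}))-\Psi_t(r(xy^{-1}))\big]\dir(y),
\]
so that the whole difference is reduced to a difference of the \emph{radial profile} $\Psi_t$ at $\rho_0:=r(x)$ and $\rho_L:=r(c_L^{-1}x)$; here $r(xy^{-1})=\rho_0+O(1)$, $r(c_L^{-1}xy^{-1})=\rho_L+O(1)$ and $\delta^{1/2}(xy^{-1})\sim\delta^{1/2}(x)$, since $y$ ranges over the bounded set $R_0$.

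The key estimate I would isolate is a pointwise, \emph{$t$-uniform} bound on this radial difference,
\[
\sup_{t>0}\big|\Psi_t(\beta)-\Psi_t(\alpha)\big|\lesssim\frac{\min(1,|\beta-\alpha|)}{\min(\alpha,\beta)^2\,\sinh\min(\alpha,\beta)},\qquad \alpha,\beta\gtrsim1.
\]
Its proof rests on the fact that $\Psi_t(r)$ is maximised at $t\sim r^2$ with $\sup_t\Psi_t(r)\sim(r^2\sinh r)^{-1}$, and that $\sup_t|\Psi_t'(r)|\sim(r^2\sinh r)^{-1}$ as well: for $|\beta-\alpha|\le1$ one applies the mean value theorem in $r$, while for $|\beta-\alpha|\ge1$ one uses the triangle inequality together with the monotonicity $\sup_t\Psi_t(\max)\le\sup_t\Psi_t(\min)$. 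Combined with the formula above (and $\rho(R_0)=O(1)$), this gives
\[
\mathcal M f_L(x)\lesssim\delta^{1/2}(x)\,\frac{\min(1,|\rho_0-\rho_L|)}{\min(\rho_0,\rho_L)^2\,\sinh\min(\rho_0,\rho_L)},
\]
valid once $x$ lies outside fixed bounded neighbourhoods of $R_0$ and $R_L$; on those two neighbourhoods I would instead bound $\|\mathcal M f_L\|_{L^1}\le\rho(\text{nbhd})^{1/2}\,\|\mathcal M\|_{2\to2}\,\|f_L\|_2=O(1)$, exactly as in Proposition \ref{uniform}.

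It then remains to integrate this pointwise bound, and the two regimes split along the scale $D:=d(R_0,R_L)\sim\log L$. In the near region, say $1<\min(\rho_0,\rho_L)\lesssim D$, one has $|\rho_0-\rho_L|\gtrsim1$ on the bulk, so the cancellation factor is $\sim1$; integrating $\delta^{1/2}(\rho_0^2\sinh\rho_0)^{-1}$ over $\{1<\rho_0\lesssim D\}$ by means of the integration formula \eqref{intformula} gives $\int_1^{cD}\frac{\di r}{r}\sim\log D\sim\log\log L$, which is the dominant contribution and produces the asserted bound. The main obstacle is the far region $\min(\rho_0,\rho_L)\gtrsim D$: discarding the factor $\min(1,|\rho_0-\rho_L|)$ there would leave the divergent $\int_D^\infty\frac{\di r}{r}$, so the cancellation must genuinely be exploited. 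The mechanism is geometric: the weight $\delta^{1/2}\dir=a\,\di\mathrm{vol}_{\mathbb H^3}$ concentrates, on each sphere $\{\rho_0=r\}$, near its highest point $(0,0,\nep^{r})$, and precisely there $|\rho_0-\rho_L|\lesssim \nep^{\,D-2r}$ is small; quantifying this trade-off between the altitude weight and the cancellation factor shows that the far region contributes only $O(1)$. I expect this balancing in the far region — reconciling the non-radial weight $a$ with the cancellation gain — to be the delicate step, whereas the near region is routine once the pointwise bound and the $t$-uniform estimate on $\Psi_t$ are in hand.
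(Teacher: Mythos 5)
Your overall architecture (an $L^2$ bound near the two supports, no cancellation in a ``near'' region which produces the $\log\log L$, and cancellation in a ``far'' region which should contribute $O(1)$) matches the paper's, and your near-region computation via \eqref{intformula} is correct. But there are two genuine gaps, both located in the far region --- exactly the part you describe rather than prove. First, the pointwise bound $\mathcal M f_L(x)\lesssim\delta^{1/2}(x)\,\min(1,|\rho_0-\rho_L|)\,\big(\min(\rho_0,\rho_L)^2\sinh\min(\rho_0,\rho_L)\big)^{-1}$ does not follow from your key estimate on $\Psi_t$: the radii that actually occur in the integrand are $\alpha=r(xy^{-1})=\rho_0+O(1)$ and $\beta=r(c_L^{-1}xy^{-1})=\rho_L+O(1)$, so when $|\rho_0-\rho_L|\ll 1$ your estimate only yields $\min(1,|\beta-\alpha|)=\min(1,O(1))\sim 1$, and the cancellation is lost precisely where you need it. To repair this you must control $|r(c_L^{-1}w)-r(w)|$ uniformly for $w$ in an $O(1)$-neighbourhood of $x$ --- e.g.\ by $L(L+|x|)/(a^2+1+|x|^2)$, via \eqref{metrica} --- and carry that stable quantity, not $|\rho_0-\rho_L|$, through the integration.

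Second, the geometric mechanism you invoke for the far region is not correct as stated. The measure $\delta^{1/2}\dir=a\,d\mathrm{vol}_{\mathbb H^3}$ restricted to the sphere $S_r$ equals $2\pi\sinh r\,a^{-1}\,da$, so it is logarithmically equidistributed over the altitudes $a\in[\nep^{-r},\nep^{r}]$ (each dyadic band of $a$ carries weight $\sim\sinh r$, and the total $r\sinh r$ in \eqref{intformula} comes from there being $\sim r$ such bands); it does not concentrate near the top. What actually saves the far region is that the cancellation factor is of order $1$ only on the low-altitude portion $\{a\lesssim L^2\nep^{-r}\}$ of $S_r$, whose weight is $\sim\sinh r\,\log L$, i.e.\ a fraction $\sim(\log L)/r$ of the sphere; one then needs $\int_{c\log L}^{\infty}(\log L)\,r^{-2}\,dr\lesssim 1$ for the bad part and $|\rho_0-\rho_L|\lesssim L^2\nep^{-r}a^{-1}+L\nep^{-r/2}a^{-1/2}$ on the rest. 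This computation, together with its twin around $c_L$, is the heart of the matter and is absent from your proposal. Note that the paper sidesteps this delicate balancing entirely: it puts the near/far boundary at $r_B=(\log L)^{\alpha}$ with $\alpha>2$ rather than at $D\sim\log L$ (still paying only $\log r_B\sim\log\log L$ in the near region), so that outside $B(e,r_B)$ a crude mean-value-theorem bound losing several powers of $L$ (see \eqref{fLastpt}) is absorbed by $\nep^{-r_B}=L^{-(\log L)^{\alpha-1}}$, with the regions $\Gamma_1,\Gamma_2,\Gamma_3$ handling the non-radial geometry. Your route is viable but strictly sharper than the paper's, and the two missing steps above must be supplied before it constitutes a proof.
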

\begin{proof}
Denote by $2R_0$ the rectangle $[-2,2]\times[-2,2]\times[\frac{1}{\nep^2},\nep^2]$ and by $2R_L $ the rectangle $(L,0,1)\cdot (2R_0)$. 
We shall estimate the $L^1$-norm of the maximal function $\mathcal Mf_L$  by integrating it over different regions of the space. 

{\bf{Step 1.}}  The operator $\mathcal M$ is a contraction on $L^{\infty}$, so that $\mathcal M f_L\leq 1$, and since $\rho(2R_0)=\rho(2R_L)\sim 1$, clearly
\begin{equation}\label{int2R-}
\int_{2R_0\cup\, 2R_L}\mathcal M f_L \dir\lesssim 1\,.
\end{equation}

{\bf{Step 2.}} Choose a ball $B=B(e,r_B)$ with $r_B=(\log L)^{\alpha}$, where $\alpha >2$ is a constant. 
Then \eqref{metrica} implies that $B\supset 2R_0\cup 2R_L$ if $L$ is large enough. We shall estimate the maximal function on $B\setminus (2R_L\cup 2R_0)$.  From \eqref{conv} we see that for any $x$ 
\begin{equation}\label{formulaMchi}
\mathcal M \chi_{R_0}(x)=\sup_t\int \chi_{R_0}(y)\,p_t(y^{-1}x)\dil (y)\lesssim 
\sup_{y \in R_0} \sup_t p_t(y^{-1}x) \,.
\end{equation}
If $x\in (2R_0)^c$ and $y\in R_0$, then $\delta^{1/2}(y^{-1}x)\sim \delta^{1/2}(x)$ and 
$$
|r(y^{-1}x)-r(x)|=|d(y,x)-d(x,e)|\leq d(y,e)\leq C\,.
$$
Applying \eqref{heatkernel} and \eqref{supt}, we see that then
$$
\sup_{y \in R_0} \sup_t p_t(y^{-1}x)\lesssim \delta^{1/2}(x)\, \frac{1}{r(x)^2\sinh r(x)}\,.
$$
It follows that for any $x\in (2R_0)^c$ 
\begin{equation}\label{Mchi+}
\mathcal M \chi_{R_0}(x)\lesssim   \delta^{1/2}(x)\, \,\frac{1}{r(x)^2\,\sinh r(x)} \,.
\end{equation}

Since $2R_0$ can be seen to contain the unit ball $B(e,1)$, we apply \eqref{intformula} 
and \eqref{Mchi+}, getting
\begin{equation}\label{intB}
\begin{aligned}
\int_{B\setminus 2R_0} \mathcal M\chi_{R_0}\dir &\lesssim \int_1^{r_B} r^{-2}\,\frac{1}{\sinh r}\,r\,\sinh r\di r\\
&= \log r_B \\
& \lesssim \log\log L\,.
\end{aligned}
\end{equation} 

Observe now that $\mathcal M\chi_{R_L}(x)=\mathcal M\chi_{R_0}(\tau_L x)$, where $\tau_Lx=(-L,0,1)\cdot x$, for any $x$ in $G$. Using the facts that $\tau_L R_L=R_0$ and $\tau_LB\subset B(e,2r_B)=2B$ for large $L$, and changing variable $\tau_Lx=v$, we obtain
\begin{equation}\label{intBB}
\begin{aligned}
\int_{B\setminus 2R_L} \mathcal M\chi_{R_L}(x)\dir(x)&= \int_{B\setminus 2R_L} \mathcal M\chi_{R_0}(\tau_Lx)\dir(x)\\
&=\int_{\tau_LB\setminus \tau_L(2R_L)} \mathcal M\chi_{R_0}(v)\, \delta(-L,0,1) \dir(v)\\
&\leq \int_{2B\setminus 2R_0}\mathcal M\chi_{R_0}(v)\,  \dir(v)\\
&\lesssim \log (2r_B) \\
&\lesssim \log\log L\,, 
\end{aligned}
\end{equation}
as before. 

Thus, from \eqref{intB} and \eqref{intBB} we deduce that
\begin{equation}\label{intBBB}
\int_{B\setminus (2R_0\cup\, 2R_L)}\mathcal Mf_L\dir\lesssim \log\log L\,.
\end{equation}

\bigskip

We now split the complement of $B$ into the following three regions:
$$
\begin{aligned}
\Gamma_1&=\Big\{x=(x_1,x_2,a)\in B^c:\,a<a^*,\,|x|<f(a)\Big\}\,,\\
\Gamma_2&=\Big\{x=(x_1,x_2,a)\in B^c:\,a\geq a^* \Big\}\,,\\
\Gamma_3&=\Big\{x=(x_1,x_2,a)\in B^c:\,a<a^*,\,|x|\geq f(a)\Big\}\,,
\end{aligned}
$$
where $a^*=\nep^{   -r_B/8}$ and $f(a)=\nep^{\sqrt{\log a^{-1}}}$\,.

\smallskip

{\bf{Step 3.}} We first estimate the integral of $\mathcal M f_L$ over the region $\Gamma_1$, and here we use the simple fact that $\mathcal M f_L\leq \mathcal M\chi_{R_L}+\mathcal M\chi_{R_0}$. For any point $x$ in $\Gamma_1$ we have 
$$
\frac{a^{-1}(1+|x|^2)}{2}<\frac{a+a^{-1}+a^{-1}|x|^2}{2} =
\cosh r(x)<\nep^{r(x)} \,,
$$
which implies that
$$
\log a^{-1}\lesssim r(x)
\,.
$$ 
From \eqref{Mchi+} we obtain
\begin{equation}\label{intOmega1}
\begin{aligned}
\int_{\Gamma_1}  \mathcal M\chi_{R_0} \dir 
 &\lesssim \int_0^{a^*}\frac{\di a}{a}\int_{|x|<f(a)} a^{-1} \,[\log a^{-1}]^{-2}\,\frac{\di x}{a^{-1}(1+|x|^2)}\\
&\lesssim \int_0^{a^*}\frac{\di a}{a}   \,[\log a^{-1}]^{-2}\,\log f(a)\\
&\lesssim \int_0^{a^*}\frac{\di a}{a}   \,[\log a^{-1}]^{-3/2}\\
&\lesssim 1\,.
\end{aligned}
\end{equation} 
With a translation argument as in \eqref{intBB}, we see that
\begin{equation}\label{intOmega1bis}
\int_{\Gamma_1}  \mathcal M\chi_{R_L} \dir\leq \int_{\tau_L(\Gamma_1)}  \mathcal M\chi_{R_0} \dir \,. 
\end{equation} 
Now, any point $(x_1,x_2,a)$ in $\tau_L(\Gamma_1)$ satisfies $a<a^*$ and 
$|x|<f(a)+L$, which implies $f(a)>f(a^*)=\nep^{\sqrt{r_B/8 }}=\nep^{(\log L)^{\alpha/2}/\sqrt{8}}>L$, if $L$ is large enough. Then $|x|<2f(a)$, and the right-hand side of \eqref{intOmega1bis} can be estimated as in \eqref{intOmega1}, with $f(a)$ replaced by $2f(a)$. We conclude that
\begin{equation}\label{intOmega1tris}
\int_{\Gamma_1}  \mathcal M\chi_{R_L} \dir\lesssim 1\,.
\end{equation}

{\bf{Step 4.}} In order to estimate the integrals over $\Gamma_2$ and $\Gamma_3$, we first write the convolution $f_L\ast p_t(x)$ at a point $x\in B^c$ as follows:
$$
\begin{aligned}
f_L\ast p_t(x)&=\ 
\int_{R_L}\,p_t\big(y^{-1} x\big)\di\lambda(y)-\int_{R_0}\,p_t(y^{-1}x)\di\lambda(y) \\
&=\int_{R_0}\,\big[p_t\big(y^{-1}(-L,0,1)x\big)-p_t(y^{-1}x)\big]\di\lambda(y)\,.
\end{aligned}
$$
Let now $y^{-1}=(y_1,y_2,b)$ be any point in $(R_0)^{-1}$ and $x=(x_1,x_2,a)$ any point in $B^c$. Then $y^{-1}(-L,0,1)x=y^{-1}(-L,0,1)y\cdot y^{-1}x=(-bL,0,1)\,y^{-1}x$, and the Mean Value Theorem implies
$$
\begin{aligned}
p_t\big(y^{-1}(-L,0,1)x\big)-p_t(y^{-1}x) &=-bL\,\partial_1p_t\big((s,0,1)\,y^{-1}x\big)\\
&=-bL\,\partial_1p_t\big(s+y_1+bx_1,y_2+bx_2,ba\big)\,,
\end{aligned}
$$
for some $s\in (-bL,0)$. We now use the fact that $X_1=a\partial_1$ and the explicit expression for the derivative $X_1p_t$ given by Lemma \ref{Xipt} (i), to obtain
$$
\begin{aligned}
&p_t\big(y^{-1}(-L,0,1)x\big)-p_t(y^{-1}x) \\&=-\frac{1}{8\pi^{3/2}}\,bL\,(ab)^{-1}\,t^{-3/2}\,\nep^{-\frac{r^2}{4t}}\,\Big(-\frac{r^2}{2t\sinh r}+\frac{\sinh r-r\cosh r}{\sinh^2r} \Big)\,\frac{s+y_1+bx_1}{\sinh r}\,\frac{1}{ab}\\
&=-\frac{1}{8\pi^{3/2}}\,\frac{L}{a^2b}\,t^{-3/2}\,\nep^{-\frac{r^2}{4t}}\,\Big(-\frac{r^2}{2t\sinh^2 r}+\frac{\sinh r-r\cosh r}{\sinh^3r} \Big)\, {(s+y_1+bx_1)}\,,
\end{aligned}
$$
where $ r=r\big(      (s,0,1)\,y^{-1}\,x      \big)\gtrsim 1$\,. Taking the supremum in $t$ and applying \eqref{supt}, we deduce that
\begin{equation}\label{suptdifferenza}
\begin{aligned}
\sup_t\big| p_t\big(y^{-1}(-L,0,1)x\big)-p_t(y^{-1}x) \big|&\lesssim \frac{L}{a^2b}\,r^{-3}\,\frac{r}{\sinh^2 r}\,|s+y_1+bx_1|\\
 &\lesssim \frac{L}{a^2}\,r^{-2}\,\frac{1}{\sinh^2 r}\,(L+|x|)\,,
\end{aligned}
\end{equation}
since $y^{-1}\in (R_0)^{-1}$,  $x\in B^c$ and $s\in (- bL,0)$.  By the triangle inequality, 
$$
\begin{aligned}
|r\big((s,0,1)\cdot y^{-1}x\big)-r(x)|&=|d\big(x,y\,\cdot(-s,0,1)\big)-d(x,e)|\\
&\leq d\big(y\,\cdot(-s,0,1),e\big)\\
&\leq d\big(y\,\cdot(-s,0,1),y \big)+d\big(y,e\big)\\
&=d\big((-s,0,1),e\big)+d\big(y,e\big)\\
&\lesssim \log L\,,
\end{aligned}
$$
where we also used \eqref{metrica}.  Since $r(x)>(\log L)^{\alpha}$, we get $r\big((s,0,1)\,y^{-1}\,x\big)\sim r(x)$ and
$$
\sinh^{-2}r\big((s,0,1)\cdot y^{-1}x\big)\lesssim \sinh^{-2}\big[r(x)-\log L\big]\lesssim \frac{L^2}{\sinh^2r(x)}\,.
$$
This and \eqref{suptdifferenza} imply that
$$
\sup_t\big|    p_t\big(y^{-1}(-L,0,1)x\big)-p_t(y^{-1}x) \big|
 \lesssim \frac{L^3}{a^2}\,r(x)^{-2}\,\frac{1}{\sinh^2 r(x)}\,(L+|x|)\,,
$$
which allows us to conclude that
\begin{equation}\label{fLastpt}
\sup_t|f_L\ast p_t(x)|	\lesssim \frac{L^3}{a^2}\,r(x)^{-2}\,\frac{1}{\sinh^2 r(x)}\,(L+|x|) \,.
\end{equation}

Let us now estimate $\mathcal M f_L$ in the region $\Gamma_2$. Using \eqref{fLastpt} and the fact that $|x|\lesssim a^{1/2} \cosh^{1/2} r(x)$, we get
$$
\mathcal Mf_L(x)\lesssim \frac{L^4}{a^2}\,\frac{r(x)^{-2}}{\sinh^2 r(x)}+ \frac{L^3}{a^{3/2}}\,\frac{r(x)^{-2}}{\sinh^{3/2} r(x)}\qquad\forall x\in \Gamma_2\,.
$$
By applying \eqref{intformula}, we then obtain
\begin{equation}\label{intOmega2}
\begin{aligned}
&\int_{\Gamma_2}\mathcal Mf_L \dir\\
&\lesssim (a^*)^{-1}\, L^4\,\int_{r_B}^{\infty}\frac{r^{-2}}{\sinh^2 r}\,r\,\sinh r\di r+
(a^*)^{-1/2}\,L^3\,
\int_{r_B}^{\infty}\frac{r^{-2}}{\sinh^{3/2} r}\,r\,\sinh r\di r\\
&\lesssim (a^*)^{-1}\,L^4\,\nep^{-r_B}+(a^*)^{-1/2}L^3\,\nep^{-r_B/2}\\
&\lesssim 1\,.
\end{aligned}
\end{equation} 

We finally estimate $\mathcal M f_L$ in the region $\Gamma_3$. For any point $x$ in $\Gamma_3$ we have $\cosh r(x)\sim a^{-1}|x|^2$, and $r(x)\sim \log (a^{-1}|x|^2)$, and also $|x|>\nep^{\sqrt{\log(1/a^*)}}=
\nep^{\sqrt{(\log L)^{\alpha}/8}}>L$ for large $L$. Thus \eqref{fLastpt} implies 
$$
\mathcal Mf_L(x)\lesssim \, \frac{L^3}{a^2}\, [\log(a^{-1}|x|^2)]^{-2}\,\frac{|x|}{[a^{-1}|x|^2]^2 }\qquad\forall x\in \Gamma_3\,.
$$
We now integrate over the region $\Gamma_3$, making the change of variables $a^{-1/2}x=y$:
\begin{equation}\label{intOmega3}
\begin{aligned}
\int_{\Gamma_3}\mathcal Mf_L \dir&\lesssim L^3\int_0^{a^*}\frac{\di a}{a^3}\int_{|x|>f(a)}[\log(a^{-1}|x|^2)]^{-2}\,\frac{|x|\di x}{[a^{-1}|x|^2]^2}\\
&=L^3\int_0^{a^*}\frac{\di a}{a^{3/2}}\int_{|y|>a^{-1/2}f(a)}[\log(|y|^2)]^{-2}\,\frac{|y|\di y}{|y|^4}\\
&\lesssim L^3\,\int_0^{a^*}\frac{\di a}{a^{3/2}}\frac{1}{[a^{-1/2}f(a)]\,[\log a^{-1}+\log f(a)^2]^2}\\
&\lesssim  L^3\,\int_0^{a^*}\frac{\di a}{a}\frac{1}{f(a)\,[\log a^{-1}]^2}\\
&=L^3\,\int_0^{a^*}\frac{\di a}{a}\frac{1}{\nep^{\sqrt{\log a^{-1}}}\,[\log a^{-1}]^2}\,.
\end{aligned}
\end{equation}
An easy calculation shows that  
$$
\int_0^{a^*}\frac{\di a}{a}\frac{1}{  \nep^{\sqrt{\log a^{-1}}} \,[\log a^{-1}]^2}
\lesssim \nep^{-\sqrt{\log (a^*)^{-1}}} \,\big[\log (a^*)^{-1}\big]^{-3/2}\,,
$$
and $\log (a^*)^{-1}=(\log L)^{\alpha}/8$. Thus by \eqref{intOmega3} we get
\begin{equation}\label{intOmega3bis}
\begin{aligned}
\int_{\Gamma_3}\mathcal Mf_L \dir& \lesssim 1\,,
\end{aligned}
\end{equation}
since $\alpha >2$. 
 
Summing up the estimates  \eqref{int2R-}, \eqref{intBBB}, \eqref{intOmega1}, \eqref{intOmega1tris},  \eqref{intOmega2}, \eqref{intOmega3bis}, one concludes that
$$
\int \mathcal Mf_L\dir\lesssim \log\log L\,,
$$
and Proposition \eqref{MfL} is proved. 
\end{proof}

By Proposition \ref{MfL} and Lemma \ref{fLH1} it follows that 
$$
\lim_{L\rightarrow +\infty} \frac{\|f_L\|_{H^1}}{\|\mathcal M f_L\|_{1}}=+\infty\,.
$$
Thus there is no converse estimate to that of Theorem \ref{H1L1bdd} (i). 

\smallskip

We can now finish the proof of Theorem \ref{H1L1bdd}\,(ii). Define the function $f$ by
$$
f(x_1,x_2,a) = \frac1{x_1 (\log x_1)^{3/2}}
$$
in the set $\{(x_1,x_2,a):\,x_1 > 3,\, \,|x_2|< 1, \,\,\nep^{-1}<a<\nep\}$,   $f=c_0$ in $R_0$ and
$f=0$ elsewhere, with the constant $c_0$ chosen so that $\int f\,d\rho =0$.

First we show that $f$ does not lie in $H^1$. We truncate the $BMO$ function $g$ from the proof of Lemma \ref{fLH1}, and let for $N=1,2,...$
$$
g_N=\min\big(\max(g,-N),N \big)\,.
$$
These $L^{\infty}$ functions satisfy $\|g_N\|_{BMO}\leq \|g\|_{BMO}$\,. It is easy to see that
\begin{equation}\label{divergent}
\lim_{N\rightarrow \infty}\int g_N\,f\dir=+\infty\,. 
\end{equation}
If $f$ were in $H^1$ and thus had an atomic decomposition converging in $L^1$, we would get $\langle g_N,f\rangle=\int g_N\,f\dir$. This would be a contradiction, since the $g_N$ define uniformly bounded functionals on $H^1$.

\smallskip

To estimate the $L^1$ norm of $\mathcal{M}f$,
we approximate $f$ with a linear combination of the functions $f_L$ with even, integer values of $L$. Let
$$
\tilde{f} = \sum_{k=2}^\infty c_k f_{2k}\,,
$$
where $c_k$ is the mean value of $f$ in $R_{2k}$. For every $(x_1,x_2,a)$ in $R_{2k}$
$$
\big|f(x_1,x_2,a)-c_k \big|\lesssim x_1^{-2} (\log x_1)^{-3/2} \lesssim k^{-2}\,.
$$
This means that the
restriction of the difference $f-\tilde{f}$ to $R_{2k}$ is $k^{-2}$
times an atom. Observe also that $f$ and $\tilde{f}$ coincide in $R_0$. 

From Proposition \ref{MfL} and Theorem  \ref{H1L1bdd} (i), we conclude
$$
\|\mathcal{M}f\|_1 \le  \|\mathcal{M}\tilde{f}\|_1 +  
\|\mathcal{M}(f-\tilde{f})\|_1 \lesssim
\sum_{k=2}^\infty  c_k \log \log k + \sum_{k=2}^\infty k^{-2}.
$$
Since $c_k\lesssim k^{-1} (\log k)^{-3/2}$, these quantities are finite. Theorem \ref{H1L1bdd} is completely proved.


\begin{thebibliography}{CDKR1}





\bibitem[ADY]{ADY} J.~P.~Anker, E.~Damek, C.~Yacoub,
                  \emph{Spherical Analysis on harmonic $AN$ groups},
                  Ann. Scuola Nom. Sup. Pisa Cl. Sci. 23 (1996), 643--679.


                 

               


\bibitem[CW1]{CW1} R. R. Coifman, G. Weiss,
                 \emph{Analyse harmonique non commutative sur certains espaces homogenes},
                 Lecture Notes in Mathematics 242, Springer (1971).

\bibitem[CW2]{CW2} R. R. Coifman, G. Weiss,
                   \emph{Extensions of Hardy spaces and their use in Analysis},
                   Bull. Am. Math. Soc. 83 (1977), 569--645.




\bibitem[CGGM]{CGGM} M.~Cowling, G.~Gaudry, S.~Giulini, G.~Mauceri,
                     \emph{Weak type $(1,1)$ estimates for heat kernel maximal functions on Lie groups},
                      Trans. Amer. Math. Soc.  323  (1991), 637--649. 

\bibitem[CGHM]{CGHM} M.~Cowling, S.~Giulini, A.~Hulanicki, G.~Mauceri,  {\emph{Spectral multipliers for a distinguished laplacian on   certain groups of exponential growth}}, Studia Math. {111} (1994), 103--121.















\bibitem[GS]{GS} S.~Giulini, P.~Sj\"ogren,
                 \emph{A note on maximal functions on a solvable Lie group},
                 Arch. Math. (Basel) 55 (1990), 156--160.

\bibitem[GLY1]{GLY1} L.~Grafakos, L.~Liu, D.~Yang, \emph{Maximal function characterizations of Hardy spaces on RD-spaces and their applications}, Sci. China Ser. A 51 (2008), 2253--2284.

\bibitem[GLY2]{GLY2} L.~Grafakos, L.~Liu, D.~Yang, \emph{Radial maximal function characterizations for Hardy spaces on RD-spaces},   Bull. Soc. Math. France  137  (2009),  225--251.  
 

\bibitem[HS]{HS} W.~Hebisch, T.~Steger,
                 \emph{Multipliers and singular integrals on expo\-nen\-tial growth groups},
                 Math. Z. 245 (2003), 37--61. 


\bibitem[L]{L}  W.M.~Li, \emph{A maximal function characterization of Hardy spaces on spaces of homogeneous type}, Approx. Theory Appl. (N. S.), {14} (1998), 12--27.

\bibitem[MS]{MS} R.A.~Mac\'ias, C.~Segovia, \emph{A decomposition into atoms of distributions on spaces of homogeneous type}, Adv. in Math., {33} 
(1979), 271--309

\bibitem[MM]{MM} G.~Mauceri, S.~Meda, \emph{$BMO$ and $H^1$ for the Ornstein-Uhlenbeck operator}, J. Funct. Anal. 252 (2007),  278--313

\bibitem[MMS]{MMS} G.~Mauceri, S.~Meda, P.~Sj\"ogren, \emph{A maximal function characterization of the Hardy space  for the Gauss measure}, to appear in Proc. Amer. Math. Soc.






\bibitem[SV]{SV} P.~Sj\"ogren, M. Vallarino, 
               \emph{ Boundedness from $H^1$ to $L^1$ of Riesz transforms on a Lie group of exponential growth},  Ann. Inst. Fourier 
58 (2008), 1117--1151.
               


\bibitem[St]{St} E.M.~Stein,
               \emph{Harmonic Analysis},
               Princeton University Press (1993).

\bibitem[T]{T} X.~Tolsa, \emph{The space $H^1$ for nondoubling measures in terms of a grand maximal operator},  Trans. Amer. Math. Soc.  355  (2003),   315--348. 


\bibitem[V]{V} M.~Vallarino,
             \emph{Spaces $H^1$ and $BMO$ on exponential growth groups}, Collect. Math. 60 (2009), 277--295.
           



                 
\end{thebibliography}
\end{document}